\documentclass{amsart}
\usepackage{amsmath,amsthm,amssymb,amsfonts}
\usepackage[colorlinks=true]{hyperref}

\hypersetup{urlcolor=blue, citecolor=blue}

\def\arXiv#1{   {\href{http://arxiv.org/abs/#1}
   {{\mdseries\ttfamily arXiv:#1}}}}

\let\MR\mr

\def\doi#1{   {\href{http://dx.doi.org/#1}
   {{\mdseries\ttfamily DOI}}}}

\newcommand{\al}{\alpha}    \newcommand{\be}{\beta}
\newcommand{\de}{\delta}    
  \newcommand{\ep}{\varepsilon}

\newcommand{\ga}{\gamma}    
\newcommand{\R}{\mathbb{R}}\newcommand{\Z}{\mathbb{Z}}

\newcommand{\pt}{\partial_t}\newcommand{\pa}{\partial}
\newcommand{\les}{{\lesssim}}
\newcommand{\beeq}{\begin{equation}}\newcommand{\eneq}{\end{equation}}

\newcommand{\Sp}{{\mathbb S}}

\def \endprf{\hfill  {\vrule height6pt width6pt depth0pt}\medskip}

\numberwithin{equation}{section}

 \newcommand{\eps}{\varepsilon}

\def\<{\langle}             \def\>{\rangle}
\def\({\left(}                 \def\){\right)}

\newtheorem{thm}{Theorem}[section]
\newtheorem{prop}[thm]{Proposition}
\newtheorem{coro}{Corollary}
\newtheorem{lem}[thm]{Lemma}

\theoremstyle{remark}
\newtheorem{rem}{Remark}

\theoremstyle{definition}

\title
{The Glassey conjecture for nontrapping obstacles}

\author{Chengbo Wang}
\address{Department of Mathematics\\
                Zhejiang University\\
                Hangzhou 310027, China}
\email{wangcbo@zju.edu.cn}
\urladdr{http://www.math.zju.edu.cn/wang}
\thanks{
The author was supported by Zhejiang Provincial Natural Science Foundation of China LR12A01002, the Fundamental Research Funds for the Central Universities, NSFC 11301478, 11271322 and J1210038.
}
\dedicatory{} \commby{}

\begin{document}

\begin{abstract}
We verify the $3$-dimensional Glassey conjecture for exterior domain $(M,g)$, where the metric $g$ is asymptotically Euclidean, provided that certain local energy assumption is satisfied. The radial Glassey conjecture exterior to a ball is also verified for dimension three or higher. The local energy assumption is satisfied for many important cases, including exterior domain with nontrapping obstacles and flat metric, exterior domain with star-shaped obstacle and small asymptotically Euclidean metric, as well as the nontrapping asymptotically Euclidean manifolds $(\R^n,g)$.
\end{abstract}

\keywords{Glassey conjecture, exterior domain, star-shaped, semilinear wave equations, local energy estimates, KSS estimates, asymptotically Euclidean manifold}

\subjclass[2010]{35L70, 35L15}

\maketitle 

\section{Introduction}
\label{sec-Intro}

The purpose of this paper is to show how integrated local energy estimates for certain
linear wave equations involving asymptotically Euclidean perturbations of the standard Laplacian lead to optimal existence theorems for the corresponding small amplitude
nonlinear wave equations with power nonlinearities in the derivatives. The problem is an analog of the Glassey conjecture in the exterior domain, see Hidano-Wang-Yokoyama \cite{HWY11} and references therein. In particular, for spatial dimension three, we prove global existence of small amplitude solutions for any power greater than a critical power, as well as the almost global existence for the critical power. 
The critical power is the same as that on the Minkowski space. On the other hand, for dimension four and higher, the current technology could only apply for the radial case, and we obtain existence results with certain lower bound of the lifespan, which is sharp in general. The non-radial case is still open, even for the Minkowski space, when the spatial dimension is four or higher.

Let us start by describing the asymptotically Euclidean manifolds $(M,g)$, 
where $M=\R^n\backslash \mathcal{K}$ with smooth and compact obstacle $\mathcal{K}$ and $n\ge 3$. Without loss of generality, when $\mathcal{K}$ is nonempty, we assume the origin lies in the interior of $\mathcal{K}$ 
and $\mathcal{K}\subset B_{1}=\{x\in \R^n: |x|<1\}$. By asymptotically Euclidean, 
we mean that 
\begin{equation}\tag{H1} \label{H1}
g=g_0+g_1(r)+g_2(x), g= g_{ij} (x)  d x^i d x^j =\sum_{i,j=1}^n
g_{ij}(x)  d x^i d x^j
\end{equation}
where $(g_{ij})$ is uniformly elliptic, $(g_{0,ij})=Diag(1,1,\cdots,1)$ is the standard Euclidean metric,  the first perturbation $g_1$ is {\em radial}, and
\beeq\tag{H1.1} \label{H1.1}
\sum_{i j k} \sum_{l\ge 0} 2^{l(i+|\al|-1)} \|\nabla^\al g_{i, jk} \|_{L^\infty_{x}(A_l)} \les 1, \forall \al\ .\eneq
Here, $A_0=\{|x|\le 1\}$, $A_l=\{2^{l-1}\le |x|\le 2^{l}\}$ for $l\ge 1$, and we say $g_1$ is radial, if, when writing out the metric $g$, with $g_2=0$, in polar coordinates $x=r\omega$ with $r=|x|$ and $\omega\in\mathbb{S}^{n-1}$, we have
$$g=g_0+g_1=\tilde g_{11}(r)dr^2+\tilde g_{22}(r) r^2 d\omega^2\ .$$ 
In this form, the assumption \eqref{H1.1} for $g_1$ is equivalent to the following requirement
\begin{equation}\tag{H1.2} \label{H1.2}
\sum_{l\ge 0} 2^{|\al| l}\|\nabla^\al ( \tilde g_{11}-1, \tilde  g_{22}-1)\|_{ L^\infty_{x}(A_l)}
\les 1 
, \forall \al\ .\
\end{equation}
When $g=g_0+\de (g_1+g_2)$ with sufficient small parameter $\de$, we call it a \textit{small} perturbation.
Notice that this sort of assumption and its role in local energy estimates seems to have started with Tataru 
\cite{T08} for Schr\"odinger equations and Metcalfe-Tataru \cite{MeTa07} for wave equations. See also
Tataru \cite{Ta13}, Metcalfe-Tataru-Tohaneanu \cite{Ta12}
for similar assumptions regarding the interaction with rotations.

We shall consider Dirichlet-wave equations on 
$(M,g)$,
\begin{equation}
\label{wave}
\left\{\begin{array}{l} \Box_g u \equiv (\pt^2-\Delta_g)u=F,\ x\in M, t>0\\
u(t,x)=0, x\in \pa M, t>0\\
u(0,x)=\phi(x), \pt u(0,x)=\psi(x)\ ,
\end{array}\right.
\end{equation}
where $\Delta_g$ is the Laplace-Beltrami operator associated with $g$.

Now we can state the local energy assumption that we shall make\\
{\bf Hypothesis 2}. For any $R>1$, we have
\beeq\label{LE}\tag{H2}
\|(\pa u, u)\|_{ L^2_t L^2_x (B_R)}\le C( \|\phi\|_{H^1}+\|\psi\|_{L^2}+\|F\|_{L^2_t L^2_x})\ ,
\eneq 
for any solutions to \eqref{wave} with data $(\phi,\psi)$ and the forcing term $F(t,x)$ vanishes for $|x|>R$.
Here $\pa=(\pt, \nabla)$ is the space-time gradient, and the constant $C$ may depend on $R$.

Let us review some important cases where the assumption \eqref{LE} is valid. First of all, when $g_1=g_2=0$, it is true for any nontrapping obstacle $\mathcal{K}$. In which case, we have $$\|(\pa u(t), u(t))\|_{L^2_x (B_R)}\le \alpha(t) ( \|\phi\|_{H^1}+\|\psi\|_{L^2})$$ with $\al(t)\les \<t\>^{-(n-1)}\in L^1_t\cap L^2_t$, 
for any homogeneous solutions to \eqref{wave} with data $(\phi,\psi)$ supported in $B_R$.
See Melrose \cite{Mel}, Ralston \cite{Ral} and references therein. For the case where $g$ is a compact perturbation of $g_0$, and $M$ is assumed to be nontrapping with respect to the metric, one also has \eqref{LE} for the Dirichlet-wave equation for all $n\ge3$ (Taylor \cite{taylor}, Burq \cite{burq}). For general nontrapping asymptotically Euclidean manifolds without obstacles, it is also known to be true (Bony-H\"afner \cite{BoHa}), at least when
\eqref{H1.1} is replaced by
\beeq\tag{H1.1'} \label{H1.1'} |\nabla^\al g_{1, jk}(x)| \les \<x\>^{
-|\al|-\de},\ 
\sum_{l\ge 0} 2^{l(|\al|+1)} \|\nabla^\al g_{2, jk} \|_{L^\infty_{x}(A_l)} \les 1,
 \eneq for some $\de>0$, where
$\<x\>=\sqrt{1+|x|^2}$. At last, it is known from Metcalfe-Sogge \cite{MeSo06} and Metcalfe-Tataru \cite{MT09} that we still have \eqref{LE}, if $g$ is a small asymptotically Euclidean  metric perturbation, and the obstacle is star-shaped (that is,
$\mathcal{K}=\{r\omega: 0\le  r\le \ga(\omega)<1, \omega\in\Sp^{n-1}\}$, for some smooth positive function $\ga$).

Having described the main assumptions about the linear problem, let us now
turn to the nonlinear equations.
Let $n\ge 3$, $p>1$, we consider the following nonlinear wave equations, 
\begin{equation}
\label{a-pde}
\left\{\begin{array}{l}\Box_g u =a(u)|\pt u|^p+ \sum_{j=1}^n a_j(u) |\pa_j u|^p\equiv F_p(u,\pt u) ,\ x\in M\\
u(t,x)=0, x\in \pa M, t>0\\
u(0,x)=\phi (x), \pt u(0,x)=\psi(x)\ ,
\end{array}\right.
\end{equation}
for given smooth functions $a$ and $a_j$,
as well as the radial problems (with $g_2=0$, $\mathcal{K}=\overline{ B_1}$)
\begin{equation}
\label{a-pde2}
\left\{\begin{array}{l}\Box_g u = a |\pt u|^p+b|\nabla u|^p\equiv G_p(u, \pt u)\ , x\in M\\
u(t,x)=0, |x|=1, t>0\\
u(0,x)=\phi (x), \pt u(0,x)=\psi(x)\ ,
\end{array}\right.
\end{equation} for given constants $a$, $b$.
When $\mathcal{K}$ is empty, it is understood as a Cauchy problem in \eqref{a-pde}.

For such problems posed on the Minkowski space, it is conjectured that  the critical power $p$ for the problem,
to admit global solutions with small, smooth initial data with compact support is $$p_c=1+\frac{2}{n-1}$$ 
in Glassey \cite{Glassey} (see also Schaeffer \cite{Sch86}, Rammaha \cite{Ram87}). The conjecture was verified in dimension $n=2, 3$ for general data (Hidano-Tsutaya \cite{HiTs95} and Tzvetkov \cite{Tz98} independently, as well as the radial case in Sideris \cite{Si83} for $n=3$). For the radial data, the existence results with sharp lower bound on the lifespan for any $p\in (1, 1+2/(n-2))$ was recently proved in Hidano-Wang-Yokoyama \cite{HWY11} (see also Fang-Wang \cite{FaWa10} for the critical case $n=2$ and $p=3$), which particularly verified the Glassey conjecture in the radial case.
On the other hand, for any spatial dimension, the blow up results (together with an explicit upper bound of the lifespan) for \eqref{a-pde},
with $F_p(u,\pt u)=|\pt u|^p$ and $p\le p_c$, were obtained in Zhou \cite{Zh01}, Zhou-Han \cite{ZhHa10} when $g$ is a compact metric perturbation.
Recently, in \cite{Wa13}, the author extended the existence results in 
\cite{HiTs95, Tz98, HWY11} to the setting with small space-time dependent asymptotically flat perturbation of the metric on $\R^n$ with $n\ge 3$, as well as the three dimensional nontrapping asymptotically Euclidean manifolds.

We can now state our main results. The first result is about the problem \eqref{a-pde} with general data,
which verify the $3$-dimensional Glassey conjecture in exterior domains, with asymptotically Euclidean metric perturbation, under the local energy assumption.
\begin{thm}
\label{thm-3}
Let $n=3$, $\mathcal{K}$ be empty or smooth and compact obstacles, and $p>2$. Consider the problem \eqref{a-pde} on $(M , g)$ satisfying \eqref{H1} and \eqref{LE}. 
There exists a small positive constant $\eps_0$, such that the problem \eqref{a-pde} has a unique global solution satisfying $u\in C([0,\infty); H_D^3(M))\cap C^1([0,\infty); H^2(M))$, whenever the initial data
satisfy the compatibility conditions of order $3$, and
\begin{equation}
\label{a-smallness}
\sum_{|\al|\le 2}\|(\nabla,\Omega)^\al (\nabla \phi,\psi)\|_{L^2(M)}=\ep \le  \ep_0, \ \|\phi\|_{L^2(M)}<\infty\ .
\end{equation} 
Moreover, when $p=2$, there exists some $c>0$, so that we have unique solution satisfying $u\in C([0, T_\ep]; H_D^3(M))\cap C^1([0, T_\ep]; H^2(M))$, with $T_\ep=\exp(c /\ep)$.
\end{thm}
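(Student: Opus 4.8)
The plan is to set up a contraction mapping / iteration scheme for \eqref{a-pde} in a norm built from the local energy assumption \eqref{LE} together with a weighted $L^2_t L^2_x$ (KSS-type) estimate away from the obstacle, combined with the pointwise decay of the homogeneous solution. The natural space is one controlling $\sum_{|\al|\le 2}\|(\nabla,\Omega)^\al \pa u\|$ in a mixed norm: local-in-space $L^2_t L^2_x(B_R)$ from \eqref{LE}, and a global piece measuring $\|\<x\>^{-1/2-}\pa u\|_{L^2_t L^2_x}$ plus a weighted $L^\infty$ piece $\|\<x\>\,\pa u\|_{L^\infty_t L^\infty_x}$ (or $L^2_t L^\infty_x$) that allows one to absorb the nonlinearity $|\pa u|^p$ with $p>2=p_c$ for $n=3$. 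Because $F_p$ is a genuine derivative nonlinearity, I would commute \eqref{wave} with the generators $\pa$ and $\Omega$ (spatial rotations, which preserve the Dirichlet condition and are, by \eqref{H1}, almost-commuting with $\Box_g$ up to lower-order terms supported where the metric perturbation lives), and apply the linear estimate \eqref{LE} to each commuted equation. This is why the smallness hypothesis \eqref{a-smallness} is stated in terms of $(\nabla,\Omega)^\al(\nabla\phi,\psi)$ and why $\|\phi\|_{L^2}$ (not $\|\phi\|_{H^1}$) enters separately — one never differentiates $u$ zero times in a way that would require controlling $u$ itself globally.

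The key steps, in order: (1) Derive from \eqref{LE}, by the standard Metcalfe–Sogge–type argument (cutting off into the region $|x|<R$ and the region $|x|>R$ where $g$ is a perturbation of $g_0$ and one can use the flat KSS / Strichartz / weighted estimates, gluing via finite speed of propagation and a pigeonhole in dyadic shells), a combined estimate of the schematic form
\begin{equation*}
\|\pa u\|_{X} \le C\big(\|\phi\|_{H^1}+\|\psi\|_{L^2}+\|F\|_{L^2_tL^2_x+L^1_tL^2_x}\big),
\end{equation*}
where $X$ includes the weighted $L^2_tL^2_x$ norm and the $L^2_tL^\infty$ (or pointwise-in-$\<x\>$) norm needed below; this also uses the $n=3$ homogeneous decay $\al(t)\lesssim\<t\>^{-2}\in L^1_t\cap L^2_t$ recalled in the introduction. (2) Commute with $(\nabla,\Omega)^\al$, $|\al|\le2$, handle the commutator terms $[\Box_g,\Omega^\al]u$ — these are supported in $|x|\lesssim 1$ and lower order, so they are absorbed by \eqref{LE} itself with a small constant or by bootstrap. (3) Estimate the nonlinearity: $F_p(u,\pa u)=a(u)|\pt u|^p+\sum a_j(u)|\pa_j u|^p$, and its commuted versions, using the algebra/Moser-type inequalities for $a(u)$ applied to $H^2$-type functions and, crucially, $\||\pa u|^p\|_{L^1_tL^2_x}\lesssim \|\<x\>^{-(p-1)/2}\pa u\|_{L^\infty_t...}^{...}\cdot\|\<x\>^{1/2}|\pa u|\|^{...}$ — i.e. trade $p-1>1$ powers of decay, available since $p>2$, against the weights in $X$; this closes the estimate for small $\ep$. (4) Run the contraction in $X$ and upgrade regularity to $u\in C([0,\infty);H^3_D)\cap C^1([0,\infty);H^2)$ by a standard energy/bootstrap argument using the compatibility conditions of order $3$. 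For the critical case $p=2$: the decay traded is exactly one power, giving a logarithmically divergent time integral, so one runs the same scheme on $[0,T_\ep]$ with all $L^2_t$ and $L^1_t$ norms taken over $[0,T_\ep]$; the nonlinear term contributes a factor $\log T_\ep$ (or $T_\ep^{\text{small}}$), and balancing $\ep \cdot (\text{growth in }T_\ep)\lesssim 1$ yields the almost-global lifespan $T_\ep=\exp(c/\ep)$.

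The main obstacle I expect is step (1): extracting a usable \emph{global} weighted space-time estimate (and especially an $L^2_tL^\infty$ or pointwise weighted bound, which is what makes the derivative nonlinearity tractable for $p$ close to $2$) from the a priori rather weak local-in-space hypothesis \eqref{LE}. The mechanism is the now-standard one — use \eqref{LE} near the obstacle, transfer to the exterior region where \eqref{H1} makes the operator a small/compact perturbation of the flat d'Alembertian so that flat KSS and weighted Strichartz estimates apply, and glue — but making the commutators with $\Omega$ and the finite-propagation cutoffs consistent with the Dirichlet boundary condition, and getting the gluing to preserve the $L^2_tL^\infty$ endpoint uniformly, is where the real work lies. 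A secondary subtlety is that one has only $\|\phi\|_{L^2}<\infty$ rather than decay on $\phi$, so the argument must be arranged so that $u$ itself is never needed in a weighted global norm — only $\pa u$ and its $(\nabla,\Omega)$-derivatives are — which is consistent with the purely-derivative form of $F_p$.
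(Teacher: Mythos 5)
Your broad plan — iterate, commute with $(\nabla,\Omega)$, derive a higher-order local energy estimate from \eqref{LE}, trade spatial decay against the weights in the LE norm to absorb $|\pa u|^p$, and pick up $\ln T$ at the critical power $p=2$ — matches the paper's architecture (Section 4, built on Proposition \ref{thm-LE-h}, Lemma \ref{thm-Sobo}, Proposition \ref{thm-1212}, Lemma \ref{KSS-dual}). But there are several concrete misidentifications that would break the argument as you describe it.

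First, you assert that the rotations $\Omega$ ``preserve the Dirichlet condition.'' For a general compact obstacle $\mathcal{K}$ this is false: $\Omega_{ij}$ is tangent to $\partial M$ only when $\mathcal{K}$ is rotationally symmetric, which is not assumed. The paper handles this by a cutoff split $u=\be_2 u+(1-\be_2)u$: near the obstacle one only commutes $\pt$ (which does preserve the boundary condition) with the equation and recovers the missing derivatives by elliptic regularity, while the rotations are applied only to the exterior piece $(1-\be_2)u$, for which there is no boundary. Second, you say the commutator terms $[\Box_g,\Omega^\al]u$ are ``supported in $|x|\lesssim 1$.'' They are not: $g_1,g_2$ are asymptotically Euclidean, not compactly supported, so $[\Box_g,\Omega]$ lives on all of space. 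What makes the commutators controllable is \eqref{H1.1} --- $g_1$ is radial and hence commutes exactly with $\Omega$, and $g_2$ carries an extra power of decay --- so for $R$ large the commutator contributes a small multiple of $\|u\|_{LE_k}$ that is absorbed, plus a local-in-$x$ piece fed back through \eqref{LE}. Without the radiality of $g_1$ and the $R$-smallness mechanism, the commutator would not be absorbable. Third, the ``main obstacle'' you worry about --- extracting an $L^2_tL^\infty_x$ or weighted Strichartz estimate from \eqref{LE} --- is not part of the paper's route at all. No Strichartz or mixed-norm linear estimate is used: only the integrated local energy norm $LE_2\cap E_2$, and the pointwise weighted bounds $|\pa u|\les\|u\|_{E_2}/\langle r\rangle$, $\|\langle r\rangle^{(3-p)/2}\pa u\|_{L^\infty_x}\les\|\langle r\rangle^{-(p-1)/2}Y^{\le 2}\pa u\|_{L^2_x}$ come directly from the stationary Sobolev Lemma \ref{thm-Sobo}, applied slice by slice in time. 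This is precisely why the LE hypothesis \eqref{LE} suffices with no dispersive input. Finally, your claim that ``$u$ itself is never needed in a weighted global norm'' overstates the case: $F_p$ contains $a(u), a_j(u)$ with $a, a_j$ merely smooth, so one does need $\|u\|_{L^\infty_{t,x}}$ to be bounded; the paper gets this from Hardy's inequality on the exterior domain (Lemma \ref{thm-Hardy}) together with $\dot H^1\cap\dot H^2\hookrightarrow L^\infty$ for $n=3$ (Proposition \ref{thm-1212}). This is the specific place where $n=3$ (and the Dirichlet/Hardy structure) is essential.

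So: correct outline and correct final norms, but the treatment of the boundary condition, the support of the commutators, and the mechanism behind the weighted $L^\infty$ input are all wrong or missing, and these are exactly the places where the exterior-domain, general-obstacle, asymptotically-Euclidean setting differs from the flat $\mathbb{R}^3$ case.
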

The almost global existence result in the case $p=2$ corresponds to the semilinear version of the John-Klainerman theorem \cite{JK84} in $\R^3$ (see Wang-Yu \cite{WY12} and references therein for recent related work for asymptotically Euclidean manifolds), as well as the seminal work of Keel-Smith-Sogge \cite{KSS02} for nontrapping obstacles. Notice that we have considerably improved the required regularity.

Here, by the  compatibility conditions of order $3$, we mean that
\beeq\label{comp1}\phi(x)=0, \psi(x)=0, \Delta_g \phi+F_p(\phi,\psi)=0\eneq
for any $x\in \pa M$. In general, we see from the equation \eqref{a-pde} that, formally, there exist $\Phi_k$ such that
$$\pt^k u(0,x)=\Phi_k(J_k \phi, J_{k-1} \psi)$$
for $x\in M$, where $J_k f=\nabla^{\le k} f\equiv (\nabla^\al f)_{|\al|\le k}$.  Then the  compatibility conditions of order $k+1$ is precisely
$\Phi_j(J_j \phi, J_{j-1} \psi)(x)=0$ for any $x\in\pa M$ and $0\le j\le k$. 
Similarly, for the equation \eqref{wave}, formally, there exist $\tilde \Phi_k$ such that
$$\pt^k u(0,x)=\tilde\Phi_k(J_k \phi, J_{k-1} \psi, J_{k-2}F)$$
for $x\in M$, where $J_k F(x)=\pa^{\le k} F(0,x)$.  Then the  compatibility conditions of order $k+1$ is precisely
$\tilde\Phi_j(J_j \phi, J_{j-1} \psi, J_{j-2}F)(x)=0$ for any $x\in\pa M$ and $0\le j\le k$.

In particular, as special cases, we have the following corollaries, for which, as we have recalled, it is known that we have \eqref{H1} and \eqref{LE}. See \cite{BoHa}, \cite{Mel, Ral}  and 
Lemma \ref{thm-LE-g1-620}
 for the corresponding local energy estimates.
\begin{coro}
Let $M=\R^3$ and $g$ be a nontrapping asymptotically Euclidean perturbation of the flat metric (\eqref{H1} with \eqref{H1.1'}),  then the $3$-dimensional Glassey conjecture is true.
\end{coro}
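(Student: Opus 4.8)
The plan is to deduce this directly from Theorem \ref{thm-3}. Take $n=3$ and $\mathcal{K}=\emptyset$, so that \eqref{a-pde} becomes the Cauchy problem on $(\R^3,g)$ and the compatibility conditions appearing in Theorem \ref{thm-3} are vacuous; then the only hypothesis on the data is the smallness condition \eqref{a-smallness}. It remains to check the two structural hypotheses of that theorem. Hypothesis \eqref{H1} is exactly what is assumed here, with \eqref{H1.1} replaced by the stronger decay conditions \eqref{H1.1'}, so there is nothing to verify beyond noting that the proof of Theorem \ref{thm-3} goes through under \eqref{H1.1'} --- a substitution already allowed in the formulation of the Introduction. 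The remaining hypothesis, the integrated local energy estimate \eqref{LE}, is precisely the theorem of Bony-H\"afner \cite{BoHa}: for a nontrapping asymptotically Euclidean manifold $(\R^n,g)$ without obstacle satisfying \eqref{H1} with \eqref{H1.1'}, the bound \eqref{LE} holds for solutions of $\Box_g u=F$.

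Granting \eqref{H1} and \eqref{LE}, Theorem \ref{thm-3} then gives, for every $p>2=1+\tfrac{2}{n-1}=p_c$, a unique global small-data solution of \eqref{a-pde}, and for the critical exponent $p=p_c=2$ a unique solution with lifespan $T_\ep=\exp(c/\ep)$, the latter bound being sharp in general. This is the existence half of the Glassey conjecture on $(\R^3,g)$. To obtain the conjecture in full --- that $p_c=2$ is genuinely the critical power --- I would combine this with the finite-time blow-up results for $p\le p_c$, with an upper bound on the lifespan matching the exponential lower bound at $p=p_c$, due to Zhou \cite{Zh01} and Zhou-Han \cite{ZhHa10}, whose arguments apply to metric perturbations of the type considered here (in particular to compactly supported ones).

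I expect no genuine obstacle: the corollary is a specialization of Theorem \ref{thm-3}, and the only point deserving attention is that the decay hypothesis \eqref{H1.1'} used in \cite{BoHa} is compatible with, indeed stronger than, the hypotheses under which Theorem \ref{thm-3} is established --- which is exactly why the Introduction phrases matters so as to permit \eqref{H1.1'} in place of \eqref{H1.1}. Thus the proof reduces to citing \cite{BoHa} for \eqref{LE} and invoking Theorem \ref{thm-3}.
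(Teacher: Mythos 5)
Your proposal matches the paper's own (essentially one-line) justification: cite Bony--H\"afner \cite{BoHa} for the local energy estimate \eqref{LE} under \eqref{H1} with \eqref{H1.1'}, and then specialize Theorem \ref{thm-3} to $n=3$, $\mathcal{K}=\emptyset$. One small clarification on your phrasing: since \eqref{H1.1'} implies \eqref{H1.1} for the radial piece $g_1$ (if $|\nabla^\alpha g_{1,jk}|\lesssim\langle x\rangle^{-|\alpha|-\delta}$ then $\sum_l 2^{l|\alpha|}\|\nabla^\alpha g_{1,jk}\|_{L^\infty(A_l)}\lesssim\sum_l 2^{-l\delta}<\infty$) and coincides with it for $g_2$, hypothesis \eqref{H1} holds as stated and Theorem \ref{thm-3} applies verbatim --- there is no need to ``note the proof goes through under \eqref{H1.1'}'' as a separate step. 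Also be aware that the blow-up results of Zhou \cite{Zh01} and Zhou--Han \cite{ZhHa10} that you invoke for the sharpness of $p_c$ are stated for \emph{compact} metric perturbations, not for the full class of asymptotically Euclidean metrics in this corollary; the paper's usage of ``Glassey conjecture'' in its corollaries refers to the existence side established by Theorem \ref{thm-3}, so this does not affect the correctness of your deduction, but the parenthetical claim about Zhou's arguments applying ``to metric perturbations of the type considered here'' overstates what those references prove.
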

This recover Theorem 1.1 in \cite{Wa13} for the case of asymptotically Euclidean manifolds. Notice that we have also slightly relaxed the metric assumption.

\begin{coro}
Let $n=3$, $g=g_0$ and $\mathcal{K}$ be empty or a nontrapping obstacle, then the Glassey conjecture is true.
\end{coro}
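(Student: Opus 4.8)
The plan is to obtain this as a direct specialization of Theorem~\ref{thm-3}: it suffices to check that the two hypotheses \eqref{H1} and \eqref{LE} of that theorem hold when $g=g_0$ and $\mathcal{K}$ is empty or a nontrapping obstacle. For \eqref{H1} there is nothing to do: we take $g_1=g_2=0$, so $(g_{ij})=Diag(1,\dots,1)$ is uniformly elliptic, the bound \eqref{H1.1} holds trivially (every perturbation vanishes), and the radiality requirement on $g_1$ is vacuous. Thus the only substantive point is to verify the local energy hypothesis \eqref{LE} in this flat nontrapping geometry.

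For \eqref{LE} I would invoke the classical obstacle scattering theory recalled after Hypothesis~2. When $\mathcal{K}=\emptyset$, \eqref{LE} is the standard local (Morawetz/KSS-type) energy estimate for the free wave equation on $\R^3$. When $\mathcal{K}$ is a nontrapping obstacle, one has the local energy decay $\|(\pa u(t),u(t))\|_{L^2_x(B_R)}\les \<t\>^{-2}(\|\phi\|_{H^1}+\|\psi\|_{L^2})$ for data supported in $B_R$, by Melrose \cite{Mel} and Ralston \cite{Ral}; since $\<t\>^{-2}\in L^1_t\cap L^2_t$, squaring and integrating in $t$ gives the homogeneous form of \eqref{LE}. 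The inhomogeneous estimate, with forcing term $F$ supported in $|x|<R$, then follows from Duhamel's formula together with finite speed of propagation and Minkowski's integral inequality, exactly the standard argument for \eqref{LE} in the flat nontrapping case.

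With \eqref{H1} and \eqref{LE} verified, Theorem~\ref{thm-3} applies verbatim to \eqref{a-pde} on $(M,g_0)$: for every $p>2$ there is global existence for all sufficiently small data satisfying the order‑$3$ compatibility conditions and \eqref{a-smallness}, and for $p=2$ there is a solution on $[0,T_\ep]$ with $T_\ep=\exp(c/\ep)$. Since $n=3$ gives the critical Glassey exponent $p_c=1+\tfrac{2}{n-1}=2$, this is precisely global existence for all supercritical powers and almost global existence at the critical power; combined with the known blow-up results at and below $p_c$ (Zhou \cite{Zh01}, Zhou-Han \cite{ZhHa10}, which cover $g=g_0$), this yields the full Glassey dichotomy.

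I do not expect any real obstacle here, since the corollary is essentially a packaging statement. The only thing requiring even minor care is the passage from the pointwise-in-$t$ local energy decay of Melrose–Ralston to the space-time integrated estimate \eqref{LE} including a forcing term — but that is the routine Duhamel-plus-finite-speed-of-propagation reduction, and the underlying decay is exactly the classical content of nontrapping obstacle scattering.
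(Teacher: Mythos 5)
Your proposal is correct and follows exactly the route the paper intends: verify \eqref{H1} trivially (taking $g_1=g_2=0$) and obtain \eqref{LE} from the Melrose--Ralston local energy decay $\alpha(t)\lesssim\langle t\rangle^{-(n-1)}=\langle t\rangle^{-2}\in L^1_t\cap L^2_t$ for nontrapping obstacles (plus Duhamel for the forcing term), then apply Theorem~\ref{thm-3}. This is precisely the justification the paper gives in the sentence preceding the corollaries, citing \cite{Mel,Ral}, so no further comment is needed.
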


\begin{coro}
Let $g$ be a small, asymptotically Euclidean perturbation of the flat metric, and $\mathcal{K}$ be a star-shaped obstacle, then the $3$-dimensional Glassey conjecture is true.
\end{coro}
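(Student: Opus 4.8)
The plan is to obtain this corollary as a direct application of Theorem~\ref{thm-3} with $n=3$; the only work is to check that the hypotheses \eqref{H1} and \eqref{LE} hold in the stated setting.

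Hypothesis \eqref{H1} is built into the notion of a small asymptotically Euclidean perturbation: writing $g=g_0+\de(g_1+g_2)$ with $\de$ small and with $g_1,g_2$ obeying the dyadic decay bounds, \eqref{H1.1} (equivalently \eqref{H1.2} for the radial piece $g_1$) holds, so this amounts to an observation.

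The substantive step is to verify \eqref{LE}. First I would recall that for a star-shaped obstacle $\mathcal{K}$ the classical Morawetz multiplier adapted to $\pa_r$ contributes a boundary term of favorable sign on $\pa\mathcal{K}$ --- this is exactly where star-shapedness is used, through the sign of the normal derivative of a defining function of $\mathcal{K}$ --- and therefore yields an integrated local energy estimate for $\Box_{g_0}$ on $M$. Since the metric perturbation is small, the resulting $O(\de)$ bulk errors can be absorbed on the left, while the lower-order contributions of $g_1,g_2$ are controlled by the $\<x\>^{-1-}$-type weights supplied by \eqref{H1}. This is precisely the local energy estimate proved by Metcalfe--Sogge \cite{MeSo06} and Metcalfe--Tataru \cite{MT09} for the Dirichlet-wave equation exterior to a star-shaped obstacle with a small asymptotically Euclidean metric; those works in fact establish a stronger global-in-space KSS-type weighted estimate, from which \eqref{LE} follows by restricting the left-hand side to the fixed ball $B_R$ and bounding the forcing term by $\|F\|_{L^2_t L^2_x}$, using $\supp F\subset\{|x|<R\}$.

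The hard part, such as it is, will be bookkeeping rather than analysis: one has to check that the metric class \eqref{H1} (with the dyadic weights of \eqref{H1.1}, the radial part $g_1$ needing no separate treatment here) lies within the scope of the cited theorems, and that the norms appearing in those statements --- say $\dot H^1\times L^2$ data and a weighted $L^2_t L^2_x$ (or $L^1_t L^2_x$) forcing norm --- dominate the weaker norms $H^1\times L^2$ and $\|F\|_{L^2_t L^2_x}$ that occur in \eqref{LE}. Once this matching is carried out, \eqref{LE} holds, and Theorem~\ref{thm-3} then supplies a unique global solution for every $p>2$, together with an almost global solution of lifespan $\exp(c/\ep)$ when $p=2$ --- which is the assertion of the corollary. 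I do not anticipate any genuine obstacle: the proof is essentially a matter of quoting the known linear local energy estimates in the star-shaped setting and invoking Theorem~\ref{thm-3}.
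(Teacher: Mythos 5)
Your proposal is correct and follows essentially the same route as the paper: the corollary is a direct application of Theorem~\ref{thm-3} with $n=3$, once one notes that \eqref{H1} is part of the standing assumption and that \eqref{LE} in the small-perturbation, star-shaped setting is the known local energy estimate of Metcalfe--Sogge \cite{MeSo06} and Metcalfe--Tataru \cite{MT09}, which the paper records as Lemma~\ref{thm-LE-g1-620} and from which \eqref{LE} follows by restricting to $B_R$ and bounding $\|F\|_{LE^*}$ by $\|F\|_{L^2_tL^2_x}$ for compactly supported $F$. Your elaboration of the favorable boundary sign from star-shapedness and the absorption of $O(\de)$ errors correctly paraphrases the content of those cited linear estimates, so this is the same argument, just with the references unpacked.
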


Turning to the problem \eqref{a-pde2} with radial data, we have long time existence of the radial solutions, in spirit of \cite{HWY11}, where the lower bound of the lifespan is sharp in general (\cite{Zh01, ZhHa10}). 
\begin{thm}\label{thm-sub}
Let $n\ge 3$, $p>p_c=1+2/(n-1)$, $\mathcal{K}=\overline{B_1}$, $g_2=0$, $(M , g)$ satisfying \eqref{H1} and \eqref{LE}.
Consider the problem \eqref{a-pde2} with radial data,
there exists a small positive constant $\eps_0$, such that the problem has a unique global  radial solution satisfying $u\in C([0,\infty); H_D^2(M))\cap C^1([0,\infty); H^1(M))$, whenever the initial data
satisfy the compatibility conditions of order $2$, and
\begin{equation}
\label{b-smallness}
\sum_{|\al|\le 1}\|\nabla^\al (\nabla \phi,\psi)\|_{L^2(M)}=\ep \le  \ep_0, \ \|\phi\|_{L^2(M)}<\infty\ .
\end{equation} 
Moreover, when $p\le p_c$, there exist some $c>0$, so that we have unique radial solutions satisfying $u\in C([0, T_\ep]; H_D^2(M))\cap C^1([0, T_\ep]; H^1(M))$, with $T_\ep=\exp(c \ep^{1-p})$ for $p=p_c$ and $T_\ep=c\ep^{2(p-1)/[(n-1)(p-1)-2]}$ for $1<p<p_c$.
\end{thm}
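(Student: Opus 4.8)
The plan is to run a contraction-mapping / energy argument in the radial setting, transplanting the structure used for Theorem \ref{thm-3} but now exploiting the extra decay available for radial functions in dimension $n\ge 3$. First I would reduce to the case $\mathcal{K}=\overline{B_1}$ with $g_2=0$, so that $g$ is a radial perturbation and the Laplace–Beltrami operator commutes with the angular variables; consequently radial data produce radial solutions, and one may work with the one-dimensional reduction in $r\in[1,\infty)$. The natural solution space will be defined by the norm
\[
\|u\|_X = \sup_{t}\sum_{|\al|\le 1}\|\nabla^\al(\pa u,u/\<x\>)(t)\|_{L^2(M)} + (\text{weighted space-time norms}),
\]
where the space-time part combines the local energy estimate \eqref{LE} (applied on $B_R$ after a cutoff) with global KSS-type / weighted estimates valid on the asymptotically Euclidean exterior, mirroring the quantities appearing in \cite{HWY11, Wa13}. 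The linear theory needed is: (i) the local energy bound \eqref{LE}, which is assumed; (ii) energy estimates on $H_D^2\cap C^1 H^1$ level, using that the data satisfy the compatibility conditions of order $2$; and (iii) a decay/Strichartz-type ingredient that gives the pointwise bound $|u(t,r)|\lesssim \<t+r\>^{-(n-1)/2}\<t-r\>^{\dots}\|u\|_X$ for radial $u$, which is where the power $p_c = 1 + 2/(n-1)$ enters.

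The key steps, in order, are: (1) set up the iteration $u_0=$ (solution of the linear homogeneous Dirichlet problem), $\Box_g u_{k+1}=G_p(u_k,\pt u_k)$ with the same data, and verify each $u_{k+1}$ lies in $C([0,T];H_D^2)\cap C^1([0,T];H^1)$ with the compatibility conditions propagated; (2) estimate the nonlinear term: since $|G_p(u,\pt u)|\lesssim |\pa u|^p$, one needs $\||\pa u|^p\|$ in the dual norm controlled by $\|u\|_X^p$, which by Hölder and the radial decay estimate costs a weight $\<t+r\>^{-(n-1)(p-1)/2}$ that is integrable in time precisely when $(n-1)(p-1)/2 > 1$, i.e. $p>p_c$; (3) close the contraction for $\ep\le\ep_0$ on $[0,\infty)$ in the supercritical case, obtaining the global radial solution; (4) in the critical and subcritical cases, run the same estimates on a finite interval $[0,T_\ep]$ and track the loss: the borderline time-integral produces a logarithmic divergence when $p=p_c$, giving $T_\ep=\exp(c\ep^{1-p})$, while for $1<p<p_c$ the integral diverges like a power of $T$, and balancing $\ep^{p-1}T^{[(n-1)(p-1)-2]/2}\sim 1$ yields $T_\ep = c\,\ep^{2(p-1)/[(n-1)(p-1)-2]}$; (5) uniqueness follows from the same difference estimate, and persistence of regularity from the linear energy estimates plus the finiteness of $\|\phi\|_{L^2}$, which controls the undifferentiated $u$ that is not otherwise bounded by the homogeneous norms.

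I expect the main obstacle to be Step (2)–(3): proving the radial pointwise decay estimate on the exterior manifold $(M,g)$ with $\mathcal{K}=\overline{B_1}$ and extracting from it the null-form-free Hölder estimate with the sharp weight, while simultaneously absorbing the curved-metric error terms and the boundary contributions. Concretely, one must show that the weighted norms controlling $|\pa u|$ and $|u|/\<x\>$ are stable under the flow generated by $\Box_g$, which requires combining \eqref{LE} (valid only on compact spatial regions) with exterior estimates away from $B_R$; the gluing is done via a spatial cutoff $\chi$ supported in $\{|x|\le 2R\}$, writing $u=\chi u + (1-\chi)u$, estimating $\chi u$ by \eqref{LE} and $(1-\chi)u$ by the known asymptotically Euclidean estimates, with the commutator $[\Box_g,\chi]u$ supported in the annulus $R\le|x|\le 2R$ and hence reabsorbed into the local energy norm. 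A secondary technical point is verifying that the reduced regularity $H_D^2\times H^1$ is enough to run the fixed-point argument — this is where having $p>1$ (so that $G_p$ is at least Lipschitz on the relevant space) and the radial Sobolev embedding $|u(r)|\lesssim r^{-(n-1)/2}\|u\|_{H^1}$ are used in tandem, exactly as in \cite{HWY11}.
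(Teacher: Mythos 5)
You have the right overall framework (iteration scheme based on Proposition~\ref{thm-LE-h}, the $LE_1\cap E_1$ norms, the KSS cutoff mechanism of Lemma~\ref{KSS-dual} for the critical and subcritical lifespan), and the final lifespan exponents you derive by balancing agree with the theorem. However, the central decay ingredient is misidentified, and that is a genuine gap rather than a stylistic difference. You propose to use a pointwise space-time decay estimate $|u(t,r)|\les \<t+r\>^{-(n-1)/2}\<t-r\>^{\cdots}\|u\|_X$ and to obtain the threshold $p>p_c$ from integrability \emph{in time}. The paper uses no such estimate. What it actually uses is the purely spatial radial trace bound of Lemma~\ref{thm-trace},
\[
\|\<r\>^{(n-1)/2}\pa u(t,\cdot)\|_{L^\infty_x(M)}\les \|u\|_{E_1}\ ,
\]
uniformly in $t$, and the threshold $p>p_c$ then appears from \emph{spatial dyadic summability}: after pulling out $\|\<r\>^{(n-1)/2}\pa u\|_{L^\infty_{t,x}}^{p-1}$, one is left with the weight $\<r\>^{-(n-1)(p-1)/2}$ acting on an $l^{1/2}_1 L^2_t L^2_x$ norm, and converting this to the $l^{-1/2}_\infty L^2_tL^2_x$ norm of the $LE$ space costs $\sum_j 2^{j(1-(n-1)(p-1)/2)}$, which is finite exactly when $p>p_c$. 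No time integral is involved; the argument works on $\R_+\times M$ uniformly.

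This distinction matters because the space-time decay bound you invoke would require Klainerman--Sobolev-type estimates built from the full Poincar\'e vector fields (Lorentz boosts, scaling), which neither preserve the Dirichlet boundary condition on $\pa B_1$ nor commute acceptably with the asymptotically Euclidean perturbation under only the hypotheses \eqref{H1} and \eqref{LE}; avoiding these vector fields is the entire point of the $Y=(\nabla,\Omega)$, $LE$/KSS framework used here. You do mention the radial Sobolev embedding $|u(r)|\les r^{-(n-1)/2}\|u\|_{H^1}$ towards the end, but you assign it only a secondary, technical role (Lipschitz continuity of $G_p$, propagation of $H^2\times H^1$ regularity), whereas in the paper's proof it is the \emph{main} decay mechanism that replaces the space-time decay you had in mind. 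So the correction is not to change your overall structure but to drop the $\<t+r\>$-decay hypothesis entirely, promote the trace/radial-Sobolev estimate to the central role, and obtain $p>p_c$ from the $l^s_q$ summability of the $LE^*$ versus $LE$ duality rather than from time integrability.
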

\begin{rem}
The smallness assumption \eqref{b-smallness} on the initial data could be weakened to 
be of ``multiplicative form", as in \cite{HWY11}. 
\end{rem}

As before, it is clear that Theorem \ref{thm-sub} applies for the flat or small asymptotically Euclidean metric, in the domain exterior to a ball.
\begin{coro}
Let $g$ be a small, radial, asymptotically Euclidean perturbation of the flat metric, and $\mathcal{K}=\overline{B_1}$, then the radial Glassey conjecture is true, for dimension $n\ge 3$.
\end{coro}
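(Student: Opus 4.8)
The plan is to read the corollary off from Theorem~\ref{thm-sub}. The geometry here is exactly the one allowed in that theorem: $n\ge 3$, $\mathcal{K}=\overline{B_1}$, and $g_2=0$ because the perturbation is radial; one works with radial data, and small, smooth radial data supported in a compact subset of $\{|x|>1\}$ automatically satisfy the order-$2$ compatibility conditions and obey \eqref{b-smallness} with $\ep$ as small as we wish. So the whole corollary reduces to checking the two structural hypotheses \eqref{H1} and \eqref{LE}: once these hold, Theorem~\ref{thm-sub} gives a global radial solution for every $p>p_c=1+2/(n-1)$ and the stated lifespan lower bounds for $1<p\le p_c$, and together with the blow-up (lifespan upper) bounds of \cite{Zh01,ZhHa10} this is exactly the statement that the radial Glassey conjecture holds exterior to $\overline{B_1}$ for $n\ge 3$.

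Checking \eqref{H1} is immediate. ``Small radial asymptotically Euclidean'' means precisely that, in polar coordinates, $g=\ti g_{11}(r)\,dr^2+\ti g_{22}(r)\,r^2\,d\om^2$ with $\ti g_{11}-1$ and $\ti g_{22}-1$ satisfying \eqref{H1.2} up to a small factor, and with $g_2=0$; since \eqref{H1.2} is equivalent to \eqref{H1.1} for a radial $g_1$, this is \eqref{H1}.

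The real content is \eqref{LE}. Since $\mathcal{K}=\overline{B_1}$ is star-shaped and $g$ is a small asymptotically Euclidean metric perturbation, \eqref{LE} is the integrated local energy estimate of Metcalfe--Sogge \cite{MeSo06} and Metcalfe--Tataru \cite{MT09}, which is what Lemma~\ref{thm-LE-g1-620} records. For a self-contained derivation that exploits the radial structure I would decompose a solution of \eqref{wave} into spherical harmonics, $u=\sum_\ell u_\ell(t,r)\,Y_\ell(\om)$; by radial symmetry of $g$ each $u_\ell$ solves a one-dimensional Dirichlet problem on $r\ge 1$ whose spatial operator is $-\tfrac{1}{a(r)r^{n-1}}\pa_r\big(b(r)r^{n-1}\pa_r\cdot\big)+\tfrac{\ell(\ell+n-2)}{\ti g_{22}(r)r^{2}}$, with $a,b$ built from $\ti g_{11},\ti g_{22}$ and close to $1$, the deviation being controlled by \eqref{H1.1} and the smallness parameter. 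A weight $u_\ell=c(r)v_\ell$ with $c\sim r^{-(n-1)/2}$ together with a radial change of variable $\rho=\rho(r)$ normalizing the principal part reduces this to $\pt^2 v_\ell-\pa_\rho^2 v_\ell+W_\ell(\rho)v_\ell=\ti F_\ell$ on $\rho\ge\rho_0$ with a Dirichlet endpoint at $\rho_0$, where $W_\ell=V_0+V_1+(\text{centrifugal})$: the centrifugal part is nonnegative, $V_0$ is the nonnegative, Morawetz-friendly term of size $\sim\rho^{-2}$ produced by the weight when $n\ge4$ (with $V_0\equiv0$ for $n=3$), and $V_1$ is a short-range remainder carrying a factor of the smallness parameter, $|V_1|\les\<\rho\>^{-2-\si}$ for some $\si>0$, coming from the metric errors via \eqref{H1.1}. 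On the half-line one then runs the classical energy and Morawetz multiplier argument, with multipliers $\pt v_\ell$ and $\rho\pa_\rho v_\ell+\tfrac12 v_\ell$: energy conservation and finite speed of propagation control the main part, the multiplier produces the coercive spacetime bulk $\iint(|\pt v_\ell|^2+|\pa_\rho v_\ell|^2)\<\rho\>^{-1}+\iint|v_\ell|^2\<\rho\>^{-3}$ modulo $V_1$, the centrifugal and $V_0$ terms only help, and the Dirichlet endpoint at $\rho_0$ contributes a boundary term of the favourable sign because the obstacle is a round ball. Absorbing $V_1$ using smallness and short-range decay, summing over $\ell$, undoing the changes of variables and localizing to $B_R$ gives \eqref{LE}.

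I expect the main obstacle to sit in this last estimate rather than in the deduction of the corollary: one has to make the Morawetz bulk term stay coercive uniformly in $\ell$ once the metric-induced potential and first-order coefficient corrections are included, i.e.\ check that the short-range decay $\<\rho\>^{-2-\si}$ furnished by \eqref{H1.1} together with the smallness of the perturbation suffices to absorb every error term (so that no trapped or bound mode appears), and one has to verify that the boundary contribution at $\rho_0$ carries the right sign for a ball. For $g=g_0$ this is Morawetz's classical exterior-ball estimate, and the smallness hypothesis is exactly what is needed to carry it over to the general radial metric.
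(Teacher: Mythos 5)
Your reduction of the corollary to Theorem~\ref{thm-sub} and your verification of \eqref{H1} and \eqref{LE} via Metcalfe--Sogge \cite{MeSo06}, Metcalfe--Tataru \cite{MT09} and Lemma~\ref{thm-LE-g1-620} (the ball $\overline{B_1}$ is star-shaped, the perturbation is small) is exactly the paper's argument, which is stated in one sentence immediately before the corollary and whose supporting references are collected in the introduction. The additional self-contained sketch of \eqref{LE} by spherical-harmonic decomposition, the weight $u_\ell = c(r)v_\ell$ with $c\sim r^{-(n-1)/2}$, and a one-dimensional Morawetz multiplier is not what the paper does: the paper never exploits the extra radial structure of $g$ beyond inheriting the non-radial estimate of Lemma~\ref{thm-LE-g1-620}. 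Your route is plausible in outline and buys a more elementary, hands-on proof for the radial case, but it would have to be executed carefully (uniformity in $\ell$ when absorbing the short-range errors $V_1$, and the sign of the Dirichlet boundary term in the new variable $\rho$), and it is not needed for the corollary; the citation to \cite{MeSo06,MT09} already settles \eqref{LE}. One small imprecision: you restrict to data ``supported in a compact subset of $\{|x|>1\}$'' to trivialize the compatibility conditions, but the Glassey conjecture and Theorem~\ref{thm-sub} concern arbitrary small smooth compactly supported data satisfying the order-$2$ compatibility conditions, not only data vanishing near the boundary; this does not affect the correctness of the deduction, since the theorem's hypothesis is precisely those compatibility conditions.
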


\begin{rem} Comparing the current Theorem \ref{thm-sub} with Theorem 1.1 in \cite{HWY11}, we do not need to assume $p<1+2/(n-2)$, which, in $\R^n$, is partly due to the $H^2$ regularity. The reason, for us to avoid the restriction in the case of exterior domain, is that we have radial Sobolev embedding $H^1\subset L^\infty$ (see Lemma \ref{thm-trace}), which is not valid in $\R^n$.
\end{rem}

As in \cite{HWY11} and \cite{Wa13}, 
one of the main ingredients in the proof is the local energy estimates with variable coefficients, in spirit of \cite{MeSo06, HWY10}. The local energy estimates first appeared in Morawetz \cite{Mo2},
which are also known as the Morawetz estimates. By now there are extensive literatures devoted to this topic and its applications, without being exhaustive we mention \cite{Strauss75, KPV, SmSo, KSS02, burq, KSS04, Sterb, HY05, MeSo05, MeSo06, MeTa07, HMSSZ, SoWa10, Ta12, Ta13, LMSTW}. Based on \eqref{H1} and \eqref{LE},  we could prove the following version of the local energy estimates. See \eqref{eq-LE} for the notations.
\begin{thm}\label{thm-LE}
Let $(M , g)$ satisfying \eqref{H1} and \eqref{LE}, then 
for any solutions to \eqref{wave}
with $(\phi, \psi, F)\in \dot H^1_D\times L^2_x \times ( LE^*+L^1_t L^2_x)$, we have $u\in C([0,\infty); \dot H^1_D(M))$, and \beeq\label{eq-LE1}\|u\|_{LE\cap E}\les \|\phi\|_{\dot H^1_D}+\|\psi\|_{L^2_x}+\|F\|_{LE^*+L^1_t L^2_x}\ .\eneq
\end{thm}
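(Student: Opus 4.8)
The plan is to prove \eqref{eq-LE1} by splicing together the hypothesis \eqref{LE}, which governs $u$ in a fixed neighbourhood of $\mathcal K$, with the global local energy estimate on $\R^n$ for small asymptotically Euclidean perturbations of $g_0$ (Metcalfe--Sogge \cite{MeSo06}, Metcalfe--Tataru \cite{MeTa07,MT09}), which governs $u$ where $|x|$ is large. I would first record a few reductions. The energy bound $\|u\|_E$ is a consequence of the $LE$ bound: in the energy identity obtained by pairing $\Box_g u=F$ with $\pt u\,\gmm$, the forcing contribution $\int_0^\infty\!\!\int_M F\,\pt u\,\gmm$ is $\les\|F\|_{LE^*}\|u\|_{LE}+\|F\|_{L^1_tL^2_x}\|\pt u\|_{L^\infty_tL^2_x}$, so after inserting the $LE$ bound the last factor is absorbed. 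The $L^1_tL^2_x$ part of $F$ reduces to the homogeneous estimate by Duhamel's formula and Minkowski's inequality, so one may take $F\in LE^*$; and since spatial truncations are dense in $LE^*$, one may take $F$ supported in a fixed ball containing $\mathcal K$. Existence of $u$ and $u\in C([0,\infty);\dot H^1_D(M))$ come from the functional calculus of the self-adjoint Dirichlet Laplace--Beltrami operator $-\De_g$ --- whose form domain is $\dot H^1_D(M)$ for $n\ge3$ by Hardy's inequality --- together with the a priori bound.

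Now enlarge $R$ so that $\mathcal K\subset B_1\subset B_R$, $\supp F\subset B_R$, and (using that the series in \eqref{H1.1} converges) $g-g_0$ lies below the Metcalfe--Tataru smallness threshold on $\{|x|\ge R\}$. Choose $\chi\in C^\infty_c$ with $\chi\equiv1$ on $B_R$, $\supp\chi\subset B_{2R}$, and split $u=\chi u+(1-\chi)u$. The far piece $v=(1-\chi)u$ is supported away from $\mathcal K$, hence lives on all of $\R^n$ and solves $\Box_{\ti g}v=-[\Box_g,\chi]u$, where $\ti g$ is a globally small asymptotically Euclidean metric agreeing with $g$ on $\{|x|\ge R\}$ (the term $(1-\chi)F$ vanishes because $\supp F\subset B_R$). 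Its Cauchy data $(1-\chi)(\phi,\psi)$ has $\dot H^1\times L^2$ norm $\les\|\phi\|_{\dot H^1_D}+\|\psi\|_{L^2_x}$, Hardy's inequality absorbing the $(\nabla\chi)\phi$ term on the fixed ball; the commutator $[\Box_g,\chi]u$ is first order in $u$ and supported in $B_{2R}\setminus B_R$, so $\|[\Box_g,\chi]u\|_{LE^*}\les\|(\pa u,u)\|_{L^2_tL^2_x(B_{2R})}$. The global $\R^n$ estimate then yields $\|v\|_{LE\cap E}\les\|\phi\|_{\dot H^1_D}+\|\psi\|_{L^2_x}+\|(\pa u,u)\|_{L^2_tL^2_x(B_{2R})}$. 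Since the $LE$ weights are comparable to constants on $B_{2R}$, the part of $\|u\|_{LE}$ supported there is likewise $\les\|(\pa u,u)\|_{L^2_tL^2_x(B_{2R})}$; adding the two regions (and restoring the general $F$ via the reductions),
\[
\|u\|_{LE\cap E}\les\|\phi\|_{\dot H^1_D}+\|\psi\|_{L^2_x}+\|F\|_{LE^*+L^1_tL^2_x}+\|(\pa u,u)\|_{L^2_tL^2_x(B_{2R})}.
\]

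Finally, the localized term is removed by \eqref{LE} itself: applied with $2R$ in place of $R$ --- legitimate since $\supp F\subset B_R$ --- it bounds $\|(\pa u,u)\|_{L^2_tL^2_x(B_{2R})}$ by $\|\phi\|_{\dot H^1_D}+\|\psi\|_{L^2_x}+\|F\|_{L^2_tL^2_x}$, whose right-hand side contains no $\|u\|_{LE}$, so feeding it back into the last display closes \eqref{eq-LE1} with no circular loop. (In \eqref{LE} the data norm is written $\|\phi\|_{H^1}$; since for $n\ge3$ one has $\dot H^1_D\not\hookrightarrow L^2$, one should read \eqref{LE} in its energy-norm form, which is what holds in all the settings of the corollaries, or else reduce \eqref{eq-LE1} by density to data supported in a fixed ball, on which the two norms are comparable by Hardy.) The main obstacle I expect is the careful execution of this two-region gluing: one must position the cut-offs so that every commutator that reintroduces $u$ falls either where \eqref{LE} applies with a $u$-free right side, or where $g$ is already a small perturbation so that the Metcalfe--Tataru estimate can absorb it; once that is arranged, the remaining commutator identities, Hardy bounds for the truncated data, and the check that the constant in \eqref{eq-LE1} depends only on $(M,g)$ are routine.
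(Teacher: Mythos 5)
Your overall plan — glue the obstacle estimate \eqref{LE} on a fixed ball to the Metcalfe--Sogge/Metcalfe--Tataru $\R^n$ estimate on the asymptotic region, using a cutoff commutator — is the same two-region strategy as the paper, and most of your intermediate steps (Duhamel reduction of the $L^1_tL^2_x$ part of $F$, derivation of the energy bound from the $LE$ bound via the modified energy identity, estimating the far piece $v=(1-\chi)u$ by applying the $\R^n$ estimate for the small metric $\ti g$ with the forcing $-[\Box_g,\chi]u$ supported in a fixed annulus) match the paper. The genuine gap is in your last step. You apply \eqref{LE} directly to $u$ itself (with $2R$ in place of $R$) to bound $\|(\pa u,u)\|_{L^2_tL^2_x(B_{2R})}$; but the right-hand side of \eqref{LE} contains $\|\phi\|_{H^1}$, and for a general $\phi\in\dot H^1_D(M)$ the $L^2(M)$ piece of that norm is simply not controlled by $\|\phi\|_{\dot H^1_D}$. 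Your own parenthetical recognizes this, but neither proposed fix closes it. Assuming an ``energy-norm form'' of \eqref{LE} replaces the stated hypothesis by a stronger one, which is not what the theorem asserts. Reducing by density to compactly supported data also fails: the constant in \eqref{LE} depends on the radius, and the comparison $\|\phi\|_{H^1}\les C(R')\|\phi\|_{\dot H^1}$ for $\supp\phi\subset B_{R'}$ likewise degrades as $R'$ grows, yet approximating a general $\phi\in\dot H^1_D$ in $\dot H^1$ by $C_0^\infty$ functions forces the supports to grow. So the bound you would obtain is not uniform over the approximating sequence and cannot be passed to the limit.

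The paper resolves exactly this by cutting the \emph{data}, not the solution. It writes $u=u_1+u_2$, where $u_1$ is the exterior solution with truncated data $(\be_1\phi,\be_1\psi,\be_1 F)$ supported in a fixed ball, so that \eqref{LE} applies to $u_1$ and Hardy gives $\|\be_1\phi\|_{H^1}\les\|\phi\|_{\dot H^1_D}$ with a constant depending only on the fixed $R$. For $u_2$ (with data supported away from the obstacle) it introduces the free $\R^n$ solution $u_0$ for $\Box_{\ti g}$, applies Lemma~\ref{thm-LE-g1-620} there, and then observes that $w=u_2-(1-\be)u_0$ has \emph{zero} Cauchy data and a compactly supported commutator forcing, so \eqref{LE} applies to $w$ with no data norm at all. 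Only after this does the paper split $u=\be_2 u+(1-\be_2)u$ to upgrade the local bound to the full $LE$ bound. Your splitting $u=\chi u+(1-\chi)u$ applied from the start conflates the two steps and leaves the near piece depending on the unattainable $H^1$ norm of $\phi$. Adopting the paper's $u_1/u_2$ decomposition (and then your own closing arguments, which are otherwise fine) is what is needed.
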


To prove the existence results, we need the following higher order local energy estimates, 
\begin{prop}[Higher order local energy estimates]\label{thm-LE-h}
For $(M , g)$ satisfying \eqref{H1} and \eqref{LE},  there exists $R>4$ such that, we have
\begin{eqnarray}
\|u\|_{LE_k\cap E_k}&\les& \sum_{|\al|\le k}\|(\nabla, \Omega)^\al (\nabla \phi,\psi)\|_{L^2_x}+\|Z^\al F\|_{LE^*+L^1_t L^2_x}\label{eq-LE2}\\
&&
+\sum_{|\ga|\le k-1}\|Z^\ga F(0,x)\|_{L^2_x}+\|\pa^\ga F\|_{(L^\infty_t\cap L^2_t )L^2_x(B_{2R})}\ ,\nonumber
\end{eqnarray}
for any solutions to \eqref{wave} satisfying compatibility condition of order $k+1$. Here and in what follows, $B_R$ means $\{x\in M: |x|<R\}$.
\end{prop}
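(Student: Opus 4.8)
The plan is to deduce the higher-order estimate \eqref{eq-LE2} from the base-level estimate \eqref{eq-LE1} of Theorem \ref{thm-LE} by commuting the equation \eqref{wave} with the vector fields $Z = (\nabla, \Omega)$ and performing an induction on $k$. Write $v = Z^\al u$ for a multi-index $\al$ with $|\al| \le k$. Since $\Delta_g$ has the decomposition \eqref{H1} into a Euclidean part, a radial part, and a perturbation $g_2$, the commutator $[\Box_g, Z^\al]$ is a sum of terms of the form $c_{\be\ga}(x)\, Z^\be \pa^\ga u$ with $|\be| + |\ga| \le |\al|$ and $|\ga| \le 2$, where the coefficients $c_{\be\ga}$ inherit the weighted decay from \eqref{H1.1}: away from the obstacle they decay like $\<x\>^{-1-|\ga|+\cdots}$ so that their contribution can be absorbed into the $LE^*$ norm of $Z^\al F$; the rotations $\Omega$ commute exactly with the Euclidean and radial pieces (that is the point of assuming $g_1$ radial), so the only genuinely dangerous commutator terms come from $g_2$, which is supported in $|x| \le 2$ by the normalization $\mathcal{K}\subset B_1$ together with \eqref{H1.1}, or more precisely can be truncated there at the cost of tails controlled by \eqref{eq-LE1}.

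The first main step is therefore to fix a cutoff radius $R > 4$ and split $u = \chi u + (1-\chi) u$ where $\chi$ is supported in $B_{2R}$ and equals one on $B_R$. On the exterior piece $(1-\chi)u$ the metric is a small, smooth, asymptotically Euclidean perturbation with no boundary, so one may freely commute all vector fields $Z^\al$ and $\pa^\al$ and apply \eqref{eq-LE1} to each $Z^\al\bigl((1-\chi)u\bigr)$, gaining the terms $\|Z^\al F\|_{LE^*+L^1_t L^2_x}$ on the right together with commutator terms supported in the annulus $R \le |x| \le 2R$ that are lower order and absorbed by induction. The interior piece $\chi u$ solves a Dirichlet problem on a fixed compact region; here one cannot use $\Omega$ freely near $\pa M$, so only the translation fields $\pa = \pt,\nabla$ are applied, and elliptic regularity for $\Delta_g$ with Dirichlet data on $B_{2R}$ together with the hypothesis \eqref{LE} (applied to $\pa^\ga\bigl(\chi u\bigr)$, whose forcing is $\pa^\ga$ of the truncated right-hand side plus commutators) produces the terms $\sum_{|\ga|\le k-1}\|Z^\ga F(0,x)\|_{L^2_x} + \|\pa^\ga F\|_{(L^\infty_t\cap L^2_t)L^2_x(B_{2R})}$ in \eqref{eq-LE2}. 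The appearance of $F(0,x)$ and the $L^\infty_t$ norm is exactly the price of converting time derivatives into spatial ones via the equation, i.e. $\pt^2(\chi u) = \Delta_g(\chi u) + \chi F + [\ldots]$, integrated from $t=0$, which is why the compatibility conditions of order $k+1$ are needed to make $\pa^\ga(\chi u)(0,\cdot) \in H^1_D$.

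The second main step is to patch the two regions together: the overlap terms produced by $[\Box_g, \chi]$ and $[\Box_g,(1-\chi)]$, and by $[\Delta_g, \pa^\ga]$ near $|x| \sim R$, involve only $u$ and finitely many of its derivatives on the annulus $\{R \le |x| \le 2R\}$, which is where the base estimate \eqref{eq-LE1} and the lower-order inductive hypothesis directly control $\|u\|_{LE_{k-1}\cap E_{k-1}}$ including the $L^2_tL^2_x(B_{2R})$ norm. One then sums over $|\al| \le k$, absorbs all such lower-order contributions, and closes the induction; the base case $k=0$ is precisely Theorem \ref{thm-LE}. The continuity-in-time statement $u \in C([0,\infty); \dot H^1_D)$ at each level, needed to justify the integrations by parts and the evaluation at $t=0$, follows as in the base case by a standard approximation argument using the compatibility conditions.

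I expect the main obstacle to be the interaction between the rotation vector fields $\Omega$ and the Dirichlet boundary: $\Omega$ is tangent to spheres but not to $\pa M$ in general, so one cannot simply commute $\Omega^\al$ through the whole problem, and the bookkeeping that keeps all $\Omega$-derivatives confined to the exterior region $|x| > R$ (where there is no boundary and the metric is radial plus a small perturbation) while only $\pa$-derivatives act in the interior — and then reconciling the two on the overlap without losing the sharp weights in the $LE^*$ norm — is the delicate part. A secondary technical point is verifying that the commutator coefficients from $g_1$ genuinely carry the extra $\<r\>^{-1}$ of decay dictated by \eqref{H1.2}, so that each commutator term lands in $LE^*$ rather than merely in $LE$; this is where the radial structure \eqref{H1.2} (as opposed to the weaker \eqref{H1.1'}) is used.
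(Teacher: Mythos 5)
Your decomposition is essentially the paper's: a cutoff confines the rotation fields to $|x|\ge R$ where there is no boundary and the metric is asymptotically flat, the interior is handled with elliptic regularity and the equation $\pt^2 u=\Delta_g u+F$, the base estimate and the local energy hypothesis \eqref{LE} are applied only to quantities preserving the Dirichlet condition, and an induction in $k$ closes the argument, with the compatibility conditions allowing the evaluation at $t=0$. However, your treatment of the exterior commutator has a genuine gap.

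First, $g_2$ is not compactly supported: assumption \eqref{H1.1} imposes only weighted decay (roughly $|\nabla^\al g_{2,jk}|\les\<x\>^{-1-|\al|}$), and the normalization $\mathcal{K}\subset B_1$ says nothing about the metric. So the $\Omega$-commutator with $g_2$ survives at every radius and produces top-order terms of the form $c(x)\,Z^{\ga}\pa u$ with $|\ga|\le|\al|$. Second, asserting that these "can be absorbed into the $LE^*$ norm of $Z^\al F$" is not an argument: those terms depend on $u$, not on $F$, and they carry as many derivatives as the left-hand side, so they are not lower order and cannot be disposed of by the inductive hypothesis either. The paper closes this step by a smallness gain from pushing the cutoff outward: by \eqref{H1.1}, and using the radiality of $g_1$ (so that the $g_1$-part of the $\Omega$-commutator vanishes identically), for every $\de>0$ one can choose $R$ so large that on the support of $1-\be_2$ the commutator coefficients satisfy $\|c_1\|_{l^1_1 L^\infty}+\|c_2\|_{l^2_1 L^\infty}\le\de$; only then is the top-order commutator contribution bounded by $\de\,\|u\|_{LE_k}$ and movable to the left-hand side. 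This is precisely where the conclusion "there exists $R>4$" in the statement of the proposition comes from, and without it the induction does not close. A smaller imprecision: \eqref{LE} cannot be applied to $\nabla^\ga(\chi u)$, since $\nabla$ does not preserve the Dirichlet boundary condition; in the paper only $\pt^j$ is applied directly, and all spatial derivatives on $B_{2R}$ are recovered afterward through elliptic regularity for $\Delta_g$ together with the equation.
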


For the existence results with $p\le p_c$, we will also require a relation between the KSS type estimates \cite{KSS02, HY05, MeSo06} and the local energy estimates.
Basically,  it is known that, the local energy norm, together with the energy norm, could control the KSS-type norm, see, e.g., \cite{KSS02}, \cite{M04}, \cite{MeSo06} and \cite{Wa13} Lemma 3.4.  
Moreover, a dual version also holds, see e.g., \cite{MeTa07}.
\begin{lem}\label{KSS-dual}
For any $\mu\in [0,1/2)$, there are positive constants $C_{\mu}$ and $C$, independent of $T\ge 2$, such that
\beeq\label{eq-KSS}
\|\pa u\|_{l^{-1/2}_2(L^2_T L^2_x)}+\|r^{-1} u\|_{l^{-1/2}_2(L^2_T L^2_x)}
\le C (\ln T)^{1/2} \|u\|_{LE\cap E ([0,T]\times M)}\ ,
\eneq
\beeq\label{eq-KSS2}
\|\pa u\|_{l^{-\mu}_2(L^2_T L^2_x)}+\| r^{-1} u\|_{l^{-\mu}_2(L^2_T L^2_x)}
\le C_{\mu} T^{1/2-\mu}\|u\|_{LE\cap E ([0,T]\times M)}\ .
\eneq
Moreover, we have 
\beeq\label{eq-KSS'}
\|F\|_{LE^*+L^1_T L^2_x ([0,T]\times M)}
\le
 C (\ln T)^{1/2} \|F\|_{l^{1/2}_2(L^2_T L^2_x)}
\ ,
\eneq
\beeq\label{eq-KSS2'}
\|F\|_{LE^*+L^1_T L^2_x ([0,T]\times M)}
\le
C_{\mu} T^{1/2-\mu}
\|F\|_{l^{\mu}_2(L^2_T L^2_x)}\ .
\eneq
Here we use $L^q_T$ to stand for $L^q_t([0,T])$.
\end{lem}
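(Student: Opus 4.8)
The estimates \eqref{eq-KSS} and \eqref{eq-KSS2} are variants of the Keel--Smith--Sogge estimates (compare \cite{KSS02}, \cite{MeSo06}, and \cite{Wa13} Lemma 3.4), while \eqref{eq-KSS'} and \eqref{eq-KSS2'} are their dual counterparts (compare \cite{MeTa07}); the proof is a routine dyadic bookkeeping, which I will organize as follows. Recall that the $l^{-\mu}_2(L^2_T L^2_x)$ norm is formed by decomposing $M$ into the annuli $A_j$ with $2^j\le T$ together with the single outer block $\{|x|\gtrsim T\}$, assigning to the $L^2_T L^2_x$ norm on $A_j$ the weight $2^{-\mu j}$ and to the one on the outer block the weight $T^{-\mu}$; dually, the $LE^*$ norm is the corresponding $l^{1/2}_1$-type norm with weights $2^{j/2}$ on the annuli and $T^{1/2}$ on the outer block, and the $l^{\mu}_2(L^2_T L^2_x)$ norm carries the positive weights $2^{\mu j}$ and $T^{\mu}$.

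For the primal estimates I would argue scale by scale. On each annulus $A_j$ with $2^j\le T$, the definition of $\|u\|_{LE}$ controls $2^{-j/2}\|\pa u\|_{L^2_T L^2_x(A_j)}$ and, since the local energy norm also contains the $\langle x\rangle^{-1}u$ piece, likewise $2^{-j/2}\|r^{-1}u\|_{L^2_T L^2_x(A_j)}\les\|u\|_{LE}$; on the outer block, where $r^{-1}\les T^{-1}$, I would instead use the energy norm together with Cauchy--Schwarz in $t$ on $[0,T]$, namely $\|\pa u\|_{L^2_T L^2_x(\{|x|\gtrsim T\})}\le T^{1/2}\|\pa u\|_{L^\infty_t L^2_x}\les T^{1/2}\|u\|_E$ and, applying Hardy's inequality at each time (legitimate since $n\ge 3$ and $0\in\mathcal K$, so no singularity at the origin and $u|_{\pa M}=0$), $\|r^{-1}u\|_{L^2_T L^2_x(\{|x|\gtrsim T\})}\le T^{1/2}\|r^{-1}u\|_{L^\infty_t L^2_x}\les T^{1/2}\|u\|_E$. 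Multiplying by the weights and summing the $\ell^2$ norm over the annuli gives the factor $\bigl(\sum_{2^j\le T}2^{(1-2\mu)j}\bigr)^{1/2}$, which is $\les T^{1/2-\mu}$ for $\mu<1/2$ and $\les(\log T)^{1/2}$ for $\mu=1/2$, while the outer block contributes $T^{-\mu}\cdot T^{1/2}\|u\|_E = T^{1/2-\mu}\|u\|_E$; this yields \eqref{eq-KSS2}, and the endpoint $\mu=1/2$ yields \eqref{eq-KSS}.

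For the dual estimates I would split $F=F_{\mathrm{in}}+F_{\mathrm{out}}$ with $F_{\mathrm{in}}=F\cdot\mathbf{1}_{\{|x|\lesssim T\}}$ and $F_{\mathrm{out}}=F\cdot\mathbf{1}_{\{|x|\gtrsim T\}}$. The near part is placed in $LE^*$: since $\|F_{\mathrm{in}}\|_{LE^*}\sim\sum_{2^j\le T}2^{j/2}\|F\|_{L^2_T L^2_x(A_j)}$, Cauchy--Schwarz over the $O(\log T)$ scales gives $\|F_{\mathrm{in}}\|_{LE^*}\les\bigl(\sum_{2^j\le T}2^{(1-2\mu)j}\bigr)^{1/2}\|F\|_{l^{\mu}_2}$, which is $\les(\log T)^{1/2}\|F\|_{l^{1/2}_2}$ for $\mu=1/2$ and $\les T^{1/2-\mu}\|F\|_{l^{\mu}_2}$ for $\mu<1/2$. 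The far part is placed in $L^1_T L^2_x$ and handled by Cauchy--Schwarz in $t$ on $[0,T]$: $\|F_{\mathrm{out}}\|_{L^1_T L^2_x}\le T^{1/2}\|F\|_{L^2_T L^2_x(\{|x|\gtrsim T\})}=T^{1/2-\mu}\cdot T^{\mu}\|F\|_{L^2_T L^2_x(\{|x|\gtrsim T\})}$, and since the outer block of the $l^{\mu}_2$ (resp. $l^{1/2}_2$) norm carries precisely the weight $T^{\mu}$ (resp. $T^{1/2}$), this is $\les T^{1/2-\mu}\|F\|_{l^{\mu}_2}$ (resp. $\les\|F\|_{l^{1/2}_2}$). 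Adding the two pieces gives \eqref{eq-KSS2'} and \eqref{eq-KSS'}.

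None of these steps is deep; the only points that deserve a little care are the application of Hardy's inequality on the exterior domain for the $r^{-1}u$ terms (where one uses $u|_{\pa M}=0$ and $\mathcal K\subset B_1$ so that $r^{-1}$ has no singularity), and the arithmetic that separates the geometric sum $\sum_{2^j\le T}2^{(1-2\mu)j}\sim T^{1-2\mu}$ for $\mu<1/2$ from the borderline sum $\sum_{2^j\le T}1\sim\log T$ at $\mu=1/2$ --- this is precisely the source of the logarithmic loss. One must also make sure that the outer region $\{|x|\gtrsim T\}$ is consistently absorbed by the energy norm on the primal side and by the $L^1_T L^2_x$ component on the dual side, which is what keeps the loss at $\mu=1/2$ down to $(\log T)^{1/2}$ rather than $T^{1/2}$.
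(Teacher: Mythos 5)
Your argument for the dual estimates \eqref{eq-KSS'} and \eqref{eq-KSS2'} is essentially identical to the paper's (same cutoff at $|x|\sim T$, same placement of the near part in $LE^*$ with Cauchy--Schwarz over the $O(\ln T)$ dyadic scales, same handling of the far part in $L^1_T L^2_x$ via Cauchy--Schwarz in $t$ and the weight $|x|^\mu T^{-\mu}$). You additionally supply a proof of the primal estimates \eqref{eq-KSS} and \eqref{eq-KSS2}, for which the paper only points to the literature; your scale-by-scale argument (local energy on $A_j$ for $2^j\les T$, energy plus Cauchy--Schwarz in $t$ and Hardy beyond) is the standard one and is correct, including the endpoint $\mu=0$.

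One small imprecision worth flagging: you describe the $l^{\pm\mu}_2$ norms as if they had a single ``outer block'' $\{|x|\gtrsim T\}$ carrying weight $T^{\pm\mu}$. That is not the paper's definition --- the $l^s_q$ norms in this paper run over \emph{all} inhomogeneous dyadic annuli $A_j$, $j\ge 0$, with weight $2^{js}$ and no truncation at scale $T$; the truncation at $T$ belongs to the proof, not the norm. Your estimates go through anyway: on the primal side, $2^{-\mu j}\le T^{-\mu}$ for $2^j\ge T$ and $\mu\ge 0$, and $\sum_{2^j\gtrsim T}\|\Phi_j\pa u\|^2_{L^2_T L^2_x}\les\|\pa u\|^2_{L^2_T L^2_x(\{|x|\gtrsim T\})}\le T\|u\|^2_E$ by the near-orthogonality of the $\Phi_j$; on the dual side, $T^{\mu}\|F\|_{L^2_T L^2_x(\{|x|\gtrsim T\})}\les\|F\|_{l^{\mu}_2}$ for $\mu\ge 0$ since $2^{\mu j}\gtrsim T^{\mu}$ on the support. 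So the conclusion is unaffected, but the description of the norm should be aligned with the paper's definition.
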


This paper is organized as follows. In the next section, we recall some Sobolev type estimates, in relation with trace theorem and Hardy's inequality. In Section 3, we give the proof of the local energy estimates, Theorem \ref{thm-LE} and Proposition \ref{thm-LE-h}, based on \eqref{H1} and \eqref{LE}, as well as a relation between the local energy estimates and KSS type estimates, Lemma \ref{KSS-dual}. In the fourth section, we give the proof of the three dimensional Glassey conjecture, following the approach of \cite{HWY11, Wa13}, adapted in the setting of exterior domains. In the last section, we prove the radial Glassey conjecture. 

\subsection{Notations} Finally we close this section by listing the notations.

 \noindent $\bullet$ $A\les B$ means that $A\le C B$ where the constant $C$ may change from line to line.

 \noindent $\bullet$
$(x^0,x^1,\cdots ,x^n)=(t,x)\in \R^{1+n}$, and $\partial_i={\partial}/{\partial x^i}$, $0\le i\le n$, with the abbreviations $\partial=(\partial_0,\partial_1,\cdots ,\partial_n)=(\partial_t,\nabla)$.
$\pa^\al=\pa_0^{\al_0}\cdots \pa_n^{\al_n}$ with multi-indices $\al, \be\in \Z_+^{n+1}$. 
 The vector fields to be used will be labeled as
\[
Y=(Y_1, \cdots, Y_{n(n+1)/2})=(\nabla,\Omega), Z=(\pt, Y)
\]
with rotational vector fields
$\Omega_{ij}=x_i \pa_j-x_j\pa_i, 1\le i<j\le n$.
Sometimes, we use $Z^{\le k}$ to denote $(Z^\al)_{|\al|\le k}$.

 \noindent $\bullet$ 
With Dirichlet boundary condition, we define $\dot H^1_D(M)$ as the closure of $f\in C_0^\infty(M)$, with respect to the norm
$$\|f\|_{\dot H^1_D(M)}=\|\nabla f\|_{L^2(M)}\ .$$
When $M=\R^n$, $\dot H^1$ means the closure of $C_0^\infty$ with respect to the $\dot H^1$ norm.

 \noindent $\bullet$
The space $l^s_q(A)$ ($1\le q\le \infty$) means
 \[
 \|u\|_{l^s_q(A)} = \|(\Phi_j(x)u(t, x))\|_{l^s_q(A)}=\|\left(\|2^{js}\Phi_j(x)u(t,x)\|_A\right)\|_{l^q_{j\ge 0}},
\]
for a partition of unity subordinate to the (inhomogeneous) dyadic (spatial) annuli,
$\sum_{j\ge 0}\Phi_j(x)=1$. Typical choice could be a radial, nonnegative $\Phi_0(x)\in C_0^\infty$ with value $1$ for $|x|\le 1$, and $0$ for $|x|\ge 2$, and $\Phi_j(x)=\Phi(2^{-j}x)-\Phi(2^{1-j}x)$ for $j\ge 1$.

 \noindent $\bullet$ 
$\|\cdot\|_{E_m}$  is the energy norm of order $m\ge 0$,
\beeq\label{eq-E}\|u\|_{E}=\|u\|_{E_0}=\|\pa u\|_{L^\infty_t L^2_x (\R_+\times M)}\ ,
\|u\|_{E_{m}}=\sum_{|\al|\le m}\| Z^\al u\|_{E}\ .\eneq
Also, we use $\|\cdot\|_{LE}$ to denote the  local energy norm
\beeq\label{eq-LE}
\|u\|_{LE}=\|\pa u\|_{l^{-1/2}_\infty L^2_t L^2_x(\R_+\times M)}+\|u/r\|_{l^{-1/2}_\infty L^2_t L^2_x (\R_+\times M)},
\eneq
On the basis of the local energy norm, we can similarly define
$\|u\|_{LE_m}$,  and the dual norm $LE^*=l^{1/2}_1 L^2_t L^2_x(\R_+\times M)$.

 \noindent $\bullet$ 
$\|u\|_{X+Y}=\inf_{u=u_1+u_2}(\|u_1\|_{ X}+\|u_2\|_{Y})$

 \noindent $\bullet$ 
For fixed $R>1$, let $\be(x)=\Phi_0(x/R)\in C_0^\infty$ such that $\be=1$ for $|x|\le R$ and vanishes for $|x|\ge 2R$. Based on $\be$, we set $\be_1(x)=\be(x/2)$, $\be_2(x)=\be(2x)$, \beeq\label{eq-gg}\tilde g=\be(4x)g_0+(1-\be(4x))g=g_0+(1-\be(4x))(g_1+g_2)\ ,\eneq which agrees with $g$ for $|x|\ge R/2$ and $g_0$ for $|x|\le R/4$.
Notice that for these functions, we have
$(1-\be)(1-\be_1)=1-\be_1$,
$\Box_g (1-\be) u=\Box_{\tilde g} (1-\be) u$,
$\Box_g (1-\be_2) u=\Box_{\tilde g} (1-\be_2) u$.

\section{Sobolev-type estimates}
\label{sec-Sobolev}
In this section, we recall several Sobolev type estimates in relation with the trace
theorem and Hardy's inequality. At first, we have the following trace theorem (see Lemma 2.2 in \cite{HWY11},  (1.3), (1.7) in \cite{FaWa11} and references therein)
\begin{lem}\label{thm-trace}
Let $n\ge 2$, then
\beeq\label{eq-trace}
\|r^{(n-1)/2} u(r\omega)\|_{L^{2}_\omega}\les \|u\|_{L^2(|x|\ge r)}+\|\nabla u\|_{L^2(|x|\ge r)}\ .
\eneq
\end{lem}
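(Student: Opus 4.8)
The plan is to prove the trace inequality \eqref{eq-trace} by a direct computation from the fundamental theorem of calculus, integrating the radial derivative from $r$ out to infinity. The key observation is that for a smooth compactly supported function $u$, we can write, for each fixed $\omega\in\Sp^{n-1}$,
\[
u(r\omega)^2 = -\int_r^\infty \pa_s\bigl(u(s\omega)^2\bigr)\,ds = -2\int_r^\infty u(s\omega)\,\pa_s u(s\omega)\,ds\ ,
\]
and then multiply by the weight $r^{n-1}$. The idea is to bound $r^{n-1}\le s^{n-1}$ inside the integral (since $s\ge r$), producing the homogeneous surface measure $s^{n-1}\,ds\,d\omega$ which is comparable to Lebesgue measure on $\{|x|\ge r\}$.

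Concretely, I would first reduce to $u\in C_0^\infty$ by density, then compute
\[
r^{n-1}\,\|u(r\omega)\|_{L^2_\omega}^2 \le 2\int_{\Sp^{n-1}}\int_r^\infty s^{n-1}\,|u(s\omega)|\,|\pa_s u(s\omega)|\,ds\,d\omega\ ,
\]
recognize the right-hand side as $2\int_{|x|\ge r} |u|\,|\pa_r u|\,dx$ where $\pa_r u = \omega\cdot\nabla u$ so $|\pa_r u|\le|\nabla u|$, and apply Cauchy--Schwarz to get the bound $2\|u\|_{L^2(|x|\ge r)}\|\nabla u\|_{L^2(|x|\ge r)}$. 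Finally, using $2ab\le a^2+b^2$ and taking square roots yields
\[
\|r^{(n-1)/2} u(r\omega)\|_{L^2_\omega}\les \|u\|_{L^2(|x|\ge r)}+\|\nabla u\|_{L^2(|x|\ge r)}\ ,
\]
which is \eqref{eq-trace}. The passage to general $u$ in the natural function space (for which both sides are finite) follows by approximation, noting that the right-hand side controls the left-hand side uniformly.

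There is no serious obstacle here; the statement is essentially a one-dimensional calculus fact dressed up in polar coordinates, and the only point requiring a little care is the density/approximation argument and making sure the weight manipulation $r^{n-1}\le s^{n-1}$ is used in the correct direction (which is why the estimate is on the exterior region $\{|x|\ge r\}$ rather than on a ball). If one wanted the cleanest exposition, I would simply cite Lemma 2.2 in \cite{HWY11} and the corresponding estimates in \cite{FaWa11}, since the result is standard; but the self-contained argument above is short enough to include directly.
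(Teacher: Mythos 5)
Your proof is correct, and it is the standard fundamental-theorem-of-calculus-in-polar-coordinates argument for this type of trace inequality. The paper itself does not supply a proof of Lemma~\ref{thm-trace} — it simply cites Lemma~2.2 of \cite{HWY11} and estimates (1.3), (1.7) in \cite{FaWa11} — so your self-contained derivation (write $u(r\omega)^2$ as $-\int_r^\infty \pa_s(u^2)\,ds$, dominate $r^{n-1}\le s^{n-1}$ on the exterior to recover the volume element, Cauchy--Schwarz, then $2ab\le a^2+b^2$) is a welcome addition rather than a deviation, and it is essentially the same argument those references use. One small point worth making explicit in a write-up: the step $r^{n-1}\le s^{n-1}$ uses $n\ge 2$ (so $n-1\ge 1$ and the power is nondecreasing), which is exactly why the hypothesis $n\ge 2$ appears and why the estimate is on $\{|x|\ge r\}$ rather than a ball, as you correctly note. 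The density argument is routine since both sides of \eqref{eq-trace} are continuous in the $H^1(\{|x|\ge r\})$ topology.
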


We will also need the following variant of the Sobolev embeddings.
\begin{lem}\label{thm-Sobo}
Let $n\ge 2$.
For any $m\in \R$ and $k\ge n/2-n/q$ with $q\in [2,\infty)$, we have
\beeq\label{eq-Sobo}
\|\<r\>^{(n-1)(1/2-1/q)+m} u\|_{L^q(M)}\les \sum_{|a|\le k} \|\<r\>^{m} Y^a u\|_{L^2(M)}\ .
\eneq
Moreover, we have
\beeq\label{eq-Sobo2}
\|\<r\>^{(n-1)/2+m} u\|_{L^\infty(M)}\les \sum_{|a|\le [(n+2)/2]} \|\<r\>^{m}  Y^a u\|_{L^2(M)}\ ,
\eneq
where $[a]$ stands for the integer part of $a$.
\end{lem}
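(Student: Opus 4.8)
The plan is to prove Lemma~\ref{thm-Sobo} by reducing the weighted estimates on the exterior domain $M$ to standard Sobolev and Sobolev-trace inequalities on dyadic spatial annuli, and then summing. First I would fix the inhomogeneous dyadic decomposition $\{A_l\}_{l\ge 0}$ used throughout the paper (so $A_0=\{|x|\le 1\}$ and $A_l\approx\{2^{l-1}\le |x|\le 2^l\}$ for $l\ge 1$), together with a slightly fattened partition of unity $\chi_l$ adapted to it, and observe that on each $A_l$ one has $\<r\>\approx 2^l$. The point of the rotational vector fields $\Omega_{ij}$ is that on a fixed annulus $A_l$ they are (after rescaling $x=2^l y$) comparable to smooth vector fields spanning the tangent space of the sphere, so that the collection $Y=(\nabla,\Omega)$ controls, on $A_l$, a full scale-invariant $W^{k,2}$-type norm once the natural powers of $2^l$ are inserted.

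Next I would carry out the estimate \eqref{eq-Sobo} on a single annulus. Rescaling $A_l$ to a fixed annulus $\tilde A=\{1/2\le |y|\le 2\}$ (or to the unit ball for $l=0$), one applies the ordinary Sobolev embedding $W^{k,2}(\tilde A)\hookrightarrow L^q(\tilde A)$, valid exactly when $k\ge n/2-n/q$, $q\in[2,\infty)$, combined with the fact that on $\tilde A$ the operators $Y^a$ for $|a|\le k$ span all partial derivatives of order $\le k$ up to bounded smooth coefficients. Tracking the Jacobian factors $2^{ln}$ from $dx=2^{ln}\,dy$ and the factors $2^{-l|a|}$ relating $\nabla_x$ to $\nabla_y$, one finds that, for $u$ supported in (a neighborhood of) $A_l$,
\[
\|\<r\>^{(n-1)(1/2-1/q)+m} u\|_{L^q(A_l)} \les 2^{l[(n-1)(1/2-1/q)+m]}\,2^{-ln(1/2-1/q)}\sum_{|a|\le k}2^{l|a|}2^{-l|a|}\|\<r\>^{-m}\cdot\<r\>^{m}Y^a u\|_{L^2(A_l)}.
\]
The key bookkeeping is that $2^{l[(n-1)(1/2-1/q)]}2^{-ln(1/2-1/q)}=2^{-l(1/2-1/q)}\le 1$, so the exponent of $2^l$ is non-positive and no growth is introduced; in fact this factor provides $\ell^2_l$-summable decay in $l$, which lets one pass from the single-annulus estimates to the global one by Minkowski's inequality in $l$ (or even just a crude sum), after telescoping the partition of unity.

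For the $L^\infty$ bound \eqref{eq-Sobo2} the argument is the same but one invokes $W^{[(n+2)/2],2}(\tilde A)\hookrightarrow L^\infty(\tilde A)$ (equivalently the Sobolev-trace/pointwise embedding at exponent $q=\infty$, which needs strictly more than $n/2$ derivatives, hence the integer part $[(n+2)/2]$) on each rescaled annulus, and then one takes the supremum over $l$ rather than an $\ell^q$ sum; here the surplus factor $2^{-l(n-1)/2}$ from the rescaling is more than enough to absorb any loss, and one simply bounds the supremum by the $\ell^2$ (or $\ell^\infty$) sum of the pieces. I expect the only genuinely delicate point to be the claim that on each annulus the family $\{Y^a : |a|\le k\}$ dominates the full Euclidean derivative jet of order $\le k$ with constants uniform in $l$ — i.e., verifying that the rotations $\Omega_{ij}$ together with $\nabla$ generate, at unit scale on $\tilde A$, an elliptic system of vector fields of the right order; this is standard (it underlies all the $Z$-vector-field arguments in the paper and in \cite{HWY11}) but is the step one must state carefully. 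Everything else is rescaling plus the classical embeddings plus a summable geometric series in $l$.
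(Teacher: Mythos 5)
Your approach is genuinely different from the paper's: the paper simply cites Lemma~2.2 of \cite{Wa13} for the case $M=\R^n$ and then remarks that the exterior-domain version follows from a cutoff and classical Sobolev. You instead attempt a self-contained proof via dyadic annuli, rescaling, and a fixed-annulus Sobolev embedding. That would be a worthwhile addition if it worked, but there is a real error in the scaling bookkeeping.

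The step that fails is the claim that, after rescaling $A_l$ to the fixed annulus $\tilde A$ by $x=2^l y$, the family $\{Y^a:|a|\le k\}$ ``spans all partial derivatives of order $\le k$ up to bounded smooth coefficients'' uniformly in $l$. Under $x=2^l y$, one has $\Omega_x=\Omega_y$ but $\nabla_x=2^{-l}\nabla_y$; hence, on $\tilde A$, the vector fields $Y=(\nabla_x,\Omega)$ only control $\nabla_y$ in the \emph{angular} directions with $l$-independent constants (since $\Omega\approx r\nabla_{\mathrm{ang}}\approx 2^l\nabla_{x,\mathrm{ang}}$), whereas in the \emph{radial} direction $Y$ gives no weight: $\pa_r$ is in $Y$ but $r\pa_r$ is not. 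So the ordinary embedding $W^{k,2}(\tilde A)\hookrightarrow L^q(\tilde A)$, undone by the rescaling, produces the RHS $2^{-ln(1/2-1/q)}\sum_{|a|\le k}2^{l|a|}\|\nabla^a_x u\|_{L^2(A_l)}$, and the factor $2^{l|a|}$ on the purely radial terms is \emph{not} absorbed by $\|Y^a u\|_{L^2(A_l)}$. Your displayed inequality inserts $2^{l|a|}2^{-l|a|}=1$ and thereby quietly cancels exactly the factor that causes the trouble; the ``surplus decay'' $2^{-l(1/2-1/q)}$ you obtain is an artifact of this cancellation and is not actually present.

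Concretely, the annulus estimate you assert, $\|u\|_{L^q(A_l)}\les 2^{-ln(1/2-1/q)}\sum_{|a|\le k}\|Y^a u\|_{L^2(A_l)}$, is \emph{false} for $q>2$: take $u$ radial, supported on a shell of unit thickness near $|x|=2^l$. Then $\|u\|_{L^q(A_l)}\approx 2^{l(n-1)/q}$, while (since $\Omega u=0$) $\sum_{|a|\le k}\|Y^a u\|_{L^2(A_l)}\approx 2^{l(n-1)/2}$, and your bound requires $2^{l(n-1)/q}\les 2^{-ln(1/2-1/q)+l(n-1)/2}$, i.e.\ $2^{l(1/2-1/q)}\les 1$, which fails as $l\to\infty$. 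The correct single-annulus estimate is $\|u\|_{L^q(A_l)}\les 2^{-l(n-1)(1/2-1/q)}\sum_{|a|\le k}\|Y^a u\|_{L^2(A_l)}$ (with no surplus decay), which the shell example saturates. The reason is exactly the anisotropy: only $n-1$ of the $n$ directions see the weight gain $\<r\>^{1/2-1/q}$, because only the angular directions come with the $r$-weighted field $\Omega$.

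To repair the argument while keeping the dyadic structure, you should not rescale $A_l$ isotropically. Instead, pass to polar coordinates and work on the cylinder $[2^{l-1},2^l]\times\Sp^{n-1}$ with the unit-scale vector fields $(\pa_r,\Omega)$; the $n$-dimensional Sobolev embedding on this product (or, equivalently, Sobolev on $\Sp^{n-1}$ using $\Omega$, combined with 1D Gagliardo--Nirenberg in $r$ using $\pa_r$, or the trace estimate in Lemma~\ref{thm-trace}) produces precisely the factor $2^{-l(n-1)(1/2-1/q)}$ after converting the measure $r^{n-1}\,dr\,d\omega$. The global bound is then obtained not from geometric decay in $l$, which is not available, but from the embedding $\ell^2\hookrightarrow\ell^q$ for $q\ge2$ applied to the partition of unity, together with the bounded overlap of the annuli. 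With this correction (and the cutoff to $A_0$ for the exterior domain) the proposal becomes a correct direct proof, but as written it proves a strictly stronger single-annulus bound that is not true.
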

When $M=\R^n$, it is precisely Lemma 2.2 in \cite{Wa13} (see also Lemma 3.1 in  \cite{LMSTW}). For the exterior domain, the estimates follow from a simple cutoff argument and the classical Sobolev embedding.

When dealing with \eqref{a-pde}, we need to have a local control of $u$, from $\nabla u$, which is achieved by the Hardy inequality.
\begin{lem}[Hardy's inequality]\label{thm-Hardy} 
Let $n\ge 3$ and $M=\R^n\backslash \mathcal{K}$ with smooth and compact $\mathcal K$.
Then for any $u\in \dot H^1_D(M)$, we have
\beeq\label{eq-Hardy}\|u/r\|_{L^2(M)}\les \|\nabla u\|_{L^2(M)}\ .\eneq
\end{lem}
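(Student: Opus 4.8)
The plan is to reduce \eqref{eq-Hardy} to the classical Hardy inequality on $\R^n$ by extension by zero, the dimensional restriction $n\ge 3$ being exactly what makes the classical inequality work. Since by definition $\dot H^1_D(M)$ is the closure of $C_0^\infty(M)$ in the $\dot H^1$ norm, it suffices to prove \eqref{eq-Hardy} for $u\in C_0^\infty(M)$ and then pass to the limit. Given such a $u$, its support is a compact subset of the \emph{open} set $M$, hence (when $\mathcal{K}\ne\emptyset$) lies a positive distance from $\pa M=\pa\mathcal{K}$; consequently the extension $\ti u$ of $u$ by zero across $\mathcal{K}$ belongs to $C_0^\infty(\R^n)$, with $\|\nabla \ti u\|_{L^2(\R^n)}=\|\nabla u\|_{L^2(M)}$ and $\|\ti u/r\|_{L^2(\R^n)}=\|u/r\|_{L^2(M)}$. (When $\mathcal{K}=\emptyset$ there is nothing to extend.) Thus the claim follows once we establish $\|v/|x|\|_{L^2(\R^n)}\les \|\nabla v\|_{L^2(\R^n)}$ for all $v\in C_0^\infty(\R^n)$ and $n\ge 3$.

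For the latter I would use the elementary identity $\mathrm{div}\,(x/|x|^2)=(n-2)/|x|^2$ on $\R^n\backslash\{0\}$ — this is where $n\ge 3$ enters, since it makes the constant $n-2$ positive. Integrating by parts against $|v|^2$, excising a small ball $B_\eps$ about the origin whose boundary contribution is $O(\eps^{n-2})\to 0$, and using that $v$ has compact support to discard the term at infinity, one obtains
\[ (n-2)\int_{\R^n}\frac{|v|^2}{|x|^2}\,dx \;=\; -2\int_{\R^n} v\,\nabla v\cdot\frac{x}{|x|^2}\,dx \;\le\; 2\,\Bigl\|\tfrac{v}{|x|}\Bigr\|_{L^2(\R^n)}\|\nabla v\|_{L^2(\R^n)} \]
by Cauchy--Schwarz; since $v$ is bounded with compact support and $n\ge 3$ the left side is finite, so dividing gives $\|v/|x|\|_{L^2(\R^n)}\le \frac{2}{n-2}\|\nabla v\|_{L^2(\R^n)}$. (Alternatively one may run this integration by parts directly on $M$ against $u\in C_0^\infty(M)$: the boundary term on $\pa M$ vanishes because $u$ vanishes near $\pa M$, and when $\mathcal{K}=\emptyset$ the origin is handled by the same excision; this avoids the extension step but is essentially the same computation.)

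Finally, for general $u\in\dot H^1_D(M)$, choose $u_k\in C_0^\infty(M)$ with $\nabla u_k\to\nabla u$ in $L^2(M)$. The inequality just proved shows $\{u_k\}$ is Cauchy in $L^2(M;r^{-2}\,dx)$, hence converges there to some $v$; passing to an a.e.\ convergent subsequence identifies $v=u$, and letting $k\to\infty$ in $\|u_k/r\|_{L^2(M)}\le\frac{2}{n-2}\|\nabla u_k\|_{L^2(M)}$ yields \eqref{eq-Hardy}. I expect no genuine obstacle in this argument; the only points needing a word of care are checking that extension by zero produces an admissible test function (which is why one works with the $C_0^\infty(M)$-completion) and the routine closure argument in the weighted space.
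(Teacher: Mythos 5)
Your proof is correct and rests on the same mechanism as the paper's: reduce by density to $u\in C_0^\infty(M)$, then integrate by parts against the $r^{-2}$ weight (your $\mathrm{div}(x/|x|^2)=(n-2)/|x|^2$ is exactly the Cartesian form of the radial identity $\pa_r r^{n-2}=(n-2)r^{n-3}$), with the boundary contribution on $\pa M$ vanishing because $\operatorname{supp} u$ is compactly contained in $M$. The one noteworthy difference is in scope. The paper's explicit computation is carried out in polar coordinates on $\{r>\ga(\omega)\}$ and therefore, as it states, only treats the \emph{star-shaped} case directly, deferring to Colin and Chabrowski--Willem for general compact $\mathcal K$. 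Your route --- extend $u\in C_0^\infty(M)$ by zero to $\tilde u\in C_0^\infty(\R^n)$ and invoke the classical Hardy inequality on $\R^n$ --- handles an arbitrary smooth compact obstacle in one stroke, since the extension step uses only that $\operatorname{supp}u$ stays a positive distance from $\pa\mathcal K$, not any geometric condition on $\mathcal K$. This buys you a self-contained proof of the lemma in the generality in which it is stated, at the (negligible) cost of the extension step and the $\eps$-excision at the origin; you correctly identify $n\ge3$ as what makes both the constant $n-2$ positive and the $O(\eps^{n-2})$ boundary term vanish. The closing density argument in the weighted $L^2$ space is also handled properly.
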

\begin{proof}It is classical, see e.g., Colin \cite{Colin01},
Chabrowski-Willem \cite{CW06}. For reader's convenience, we give an explicit proof in the case of star-shaped obstacle here.
By density, it suffices to prove \eqref{eq-Hardy} for $u\in C_0^\infty(M)$. For this case, we have
\begin{eqnarray*}
\int_{\ga(\omega)}^\infty |u/r|^2 r^{n-1} dr & = &
\frac{1}{n-2}\int_{\ga(\omega)}^\infty u^2 \pa_r r^{n-2} dr
 \\
 & = & \left.\frac{1}{n-2}u^2  r^{n-2}\right|_{r={\ga(\omega)}}^\infty-\frac{2}{n-2}\int_{\ga(\omega)}^\infty  r^{n-2} u \pa_r u dr\\
 & \le & \frac{2}{n-2}\left(\int_{\ga(\omega)}^\infty |u/r|^2 r^{n-1} dr \right)^{1/2}\left(\int_{\ga(\omega)}^\infty |\pa_r u|^2 r^{n-1} dr\right)^{1/2} \ ,
\end{eqnarray*}
which, after integrating with respect to $\omega$,  yields \eqref{eq-Hardy}.
\end{proof}

As a direct consequence, we have
\begin{prop}\label{thm-1212} Let $n=3$ and $u\in \dot H^1_D(M)\cap \dot H^2(M)$,  we have 
\beeq\label{eq-1212}
\|u\|_{L^\infty(M)}\les \sum_{|\al|\le 1}\|\nabla \nabla^\al u\|_{L^2(M)}
\eneq
\end{prop}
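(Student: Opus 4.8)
The plan is to derive \eqref{eq-1212} by combining the radial Sobolev embedding from Lemma~\ref{thm-Sobo} (specifically \eqref{eq-Sobo2} with $n=3$) with the Hardy inequality of Lemma~\ref{thm-Hardy}, which is what lets us absorb the undifferentiated term $u$ appearing on the right of \eqref{eq-Sobo2} into the stated right-hand side involving only $\nabla u$ and $\nabla^2 u$. Concretely, take $n=3$, $m=0$ in \eqref{eq-Sobo2}; since $[(n+2)/2]=[5/2]=2$, this gives
\[
\|u\|_{L^\infty(M)}\les \sum_{|a|\le 2}\|Y^a u\|_{L^2(M)},
\]
where $Y=(\nabla,\Omega)$. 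The issue is that the right-hand side here contains $\|u\|_{L^2(M)}$ itself, and also terms like $\|\Omega u\|_{L^2}$, which are not controlled by $\dot H^1_D\cap \dot H^2$ norms in general (indeed $u$ need not be in $L^2$).

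First I would note that each rotation vector field satisfies $\Omega_{ij}=x_i\pa_j-x_j\pa_i$, so pointwise $|\Omega u|\les |x|\,|\nabla u|$ and $|\Omega^2 u|\les |x|^2|\nabla^2 u| + |x|\,|\nabla u|$; more relevantly, after commuting, every term $Y^a u$ with $1\le |a|\le 2$ is a sum of terms of the form $P(x)\,\pa^\be u$ with $|\be|\ge 1$ and $P$ a polynomial of degree $\le |a|$ with $\deg P \le |\beta|$ matching the weight. So the only genuinely problematic term in $\sum_{|a|\le 2}\|Y^a u\|_{L^2}$ is the $|a|=0$ term $\|u\|_{L^2(M)}$. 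To handle it, I would instead apply \eqref{eq-Sobo2} on a bounded region and on the exterior separately, or more cleanly: apply a localized version. Let $\chi\in C_0^\infty(\R^n)$ equal $1$ on $B_2$ and supported in $B_4$. On $\supp(1-\chi)\subset\{|x|\ge 2\}$ the weight $\<r\>$ is comparable to $r$, and one can rerun the proof of \eqref{eq-Sobo2} (which is a radial/spherical-harmonics Sobolev argument) keeping the $r$-weights, so that only terms $\<r\>^{-3/2+|a|}Y^a u$ with $|a|\ge 1$ — equivalently terms bounded by $\<r\>^{-3/2}\cdot\<r\>^{|a|}$ times $|\nabla^{\le 2}u|$ — appear; these are all $\les \|\nabla^{\le 1}\nabla u\|_{L^2}$ since $\<r\>^{-3/2+|a|}$ applied to $Y^a u\sim r^{|a|}\pa^{\ge 1}u$ gives weight $\<r\>^{-3/2}$ or better times $\pa^{\ge1}u\in L^2$. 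This controls $\|u\|_{L^\infty(|x|\ge 2)}$.

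For the inner region $\{|x|\le 2\}\cap M$, I would use that $\chi u\in H^1_0(M)\cap H^2$ and apply the ordinary Sobolev embedding $H^2(\R^3)\hookrightarrow L^\infty$ to the extension by zero of $\chi u$ (legitimate because of the Dirichlet condition and the boundedness of $\mathcal K$), giving $\|u\|_{L^\infty(|x|\le 2)}\les \|\chi u\|_{H^2}\les \|u\|_{L^2(B_4\cap M)}+\sum_{1\le|\al|\le 2}\|\nabla^\al u\|_{L^2(B_4\cap M)}$. The remaining term $\|u\|_{L^2(B_4\cap M)}$ is then disposed of by Hardy's inequality \eqref{eq-Hardy}: since $r\les 4$ on this set, $\|u\|_{L^2(B_4\cap M)}\les \|u/r\|_{L^2(M)}\les \|\nabla u\|_{L^2(M)}$. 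Putting the two regions together yields \eqref{eq-1212}.

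The main obstacle I anticipate is purely bookkeeping: making sure that in the exterior-region estimate the weights work out so that no undifferentiated $u$ survives — i.e.\ verifying that $[(n+2)/2]$ derivatives with the sharp weight $\<r\>^{(n-1)/2}=\<r\>$ genuinely leave a decaying weight $\<r\>^{-3/2+|a|}$, $|a|\ge 1$, on each commuted term, so that pairing against $\pa^{\ge 1}u\in L^2$ converges. If one prefers to avoid re-proving a weighted version of \eqref{eq-Sobo2}, an alternative is to apply \eqref{eq-Sobo2} directly to $u$ after first subtracting nothing (since we only need an upper bound) and then bound $\|u\|_{L^2(M)}$ by splitting into $|x|\le 2$ (Hardy) and $|x|\ge 2$ (where $\|u\|_{L^2(|x|\ge2)}$ is \emph{not} finite in general, so this cruder route fails) — hence the localization above is essentially forced, and getting that split clean is the one point requiring care.
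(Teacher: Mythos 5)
Your overall strategy --- a cutoff into a bounded interior region handled by local $H^2\hookrightarrow L^\infty$ plus Hardy, and an unbounded exterior region handled by a Sobolev estimate --- is the same as the paper's, and your interior argument is essentially identical to the paper's.

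The exterior step, however, has a genuine gap. You propose re-deriving a weighted variant of \eqref{eq-Sobo2} whose right-hand side consists of terms $\<r\>^{-3/2+|a|}Y^a u$ with $1\le|a|\le 2$, and you assert these are bounded by $\sum_{|\al|\le 1}\|\nabla\nabla^\al u\|_{L^2(M)}$ because ``$\<r\>^{-3/2+|a|}$ applied to $Y^a u\sim r^{|a|}\pa^{\ge 1}u$ gives weight $\<r\>^{-3/2}$ or better.'' The arithmetic does not check out: for a pure rotation term one has $|Y^a u|\les r^{|a|}|\nabla^{\le|a|}u|$, so $\<r\>^{-3/2+|a|}|Y^a u|\les\<r\>^{-3/2+2|a|}|\nabla^{\le|a|}u|$, and for $|a|=1,2$ this produces the \emph{growing} weights $\<r\>^{1/2}$ and $\<r\>^{5/2}$, not $\<r\>^{-3/2}$. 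In particular $\|\<r\>^{5/2}\nabla^2 u\|_{L^2}$ is not controlled by $\|\nabla^2 u\|_{L^2}$, so the claimed chain of inequalities fails. Said differently: once the rotations $\Omega$ are in play, the weight needed to pass from $L^2_\omega$ to $L^\infty_\omega$ does not cancel against the $r$ factors that $\Omega$ introduces, and your proposed weighted inequality as written is not one you can establish.

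The paper avoids this entirely and is worth comparing. Since $\mathcal K\subset B_1$, the function $(1-\be)u$ extends by zero to all of $\R^3$, and one simply invokes the standard homogeneous Sobolev embedding $\dot H^1(\R^3)\cap\dot H^2(\R^3)\hookrightarrow L^\infty(\R^3)$, valid because $1<3/2<2$. The cutoff derivatives $\nabla^{\le 2}\bigl((1-\be)u\bigr)$ produce $\nabla u$, $\nabla^2 u$, and commutator terms supported in a fixed annulus involving $u$ itself, and the latter are absorbed by Hardy's inequality exactly as in your interior step. No weighted radial Sobolev inequality and no vector-field bookkeeping are needed. Replacing your exterior argument by this one closes the gap and gives \eqref{eq-1212} in a few lines.
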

\begin{proof}
Since $\mathcal{K}\subset B_1$ and $R>1$,
we can view $(1-\be) u$ as a function in $\R^n$.
By Sobolev embedding $H^2(M\cap B_{2R})\subset L^\infty(M\cap B_{2R})$, and $\dot H^1\cap \dot H^2\subset L^\infty(\R^n)$, we have
\begin{eqnarray*}
\|u\|_{L^\infty(M)} & \le &
\|\be u\|_{L^\infty (M\cap B_{2R})} +\|(1-\be) u\|_{L^\infty(\R^n)} 
 \\
 & \les & 
 \|\be u\|_{H^2(M\cap B_{2R})} +\|(1-\be) u\|_{\dot H^1\cap \dot H^2(\R^n)} 
 \\
 & \les & \|u\|_{L^2(M\cap B_{2R})}+
 \sum_{|\al|\le 1}\|\nabla \nabla^\al u\|_{L^2(M)}
\\
 & \les & 
 \sum_{|\al|\le 1}\|\nabla \nabla^\al u\|_{L^2(M)}
  \ ,
\end{eqnarray*}
where we used Hardy's inequality in the last step. 
\end{proof}

\section{Local energy estimates}\label{sec-KSS}
 In this section, we give the proof of the local energy estimates, Theorem \ref{thm-LE} and Proposition \ref{thm-LE-h}, based on \eqref{H1} and \eqref{LE}. In addition, we prove Lemma \ref{KSS-dual}.

\subsection{Local energy estimates with variable coefficients}
To begin, let us recall local energy estimates with variable coefficients, which are essentially obtained in
\cite{MeSo06}, \cite{MT09} (see also \cite{MeTa07, HWY10, HWY11, WY12} and \cite{Wa13} Lemma 3.1). 
\begin{lem}\label{thm-LE-g1-620}
Let $n\ge 3$ and $M=\R^n$. Consider the linear problem $\Box_g u=F$ with $g(x)=g_0+\delta h(x)$ satisfying $$\sum_{j k} \sum_{l\ge 0} 2^{ l |\al|} \|\partial^\al_{x} h_{jk} \|_{L^\infty_{x}(A_l)} \les 1, \forall \al\ .$$ Then there exists a constant $\de_0$, such that for any $0\le \de\le \de_0$, we have the following local energy estimates,
\beeq\label{eq-LE-g1-620}
\|u\|_{LE\cap E}\les \|\pa u(0)\|_{L^2_x}+\|F\|_{LE^*+L^1_t L^2_x}\ .
\eneq
In addition, the same results apply for
solutions to \eqref{wave}, when $M=\R^n\backslash \mathcal{K}$ with star-shaped $\mathcal{K}$. 
\end{lem}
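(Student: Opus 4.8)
The plan is to prove the local energy estimate \eqref{eq-LE-g1-620} by a positive commutator / multiplier argument, following the scheme of Metcalfe--Sogge \cite{MeSo06} and Metcalfe--Tataru \cite{MT09}. The basic object is a Morawetz-type multiplier $Xu = f(r)\pa_r u + \tfrac{n-1}{2}\tfrac{f(r)}{r}u + g(r)u$ (plus the energy multiplier $\pt u$), where $f$ is built as a slowly-varying dyadic sum, $f(r)\approx 2^{-j}$ on $A_j$ near the origin and $f(r)\to$ const at infinity, chosen so that $f$ is increasing and $f'/r - f/r^2 \ge 0$ (roughly $f$ concave-like). Pairing $\Box_{g_0} u$ with this multiplier and integrating by parts over $[0,T]\times\R^n$ produces, on the left, a positive quadratic form controlling exactly $\|\pa u\|_{l^{-1/2}_\infty L^2_tL^2_x}^2 + \|u/r\|_{l^{-1/2}_\infty L^2_tL^2_x}^2$, i.e. the $LE$ norm, together with the $L^\infty_t L^2_x$ energy; on the right, boundary-in-time terms bounded by $\|\pa u(0)\|_{L^2}^2$ and by Cauchy--Schwarz/$\eps$-Young the forcing contribution pairs against $f\pa_r u + \cdots$, which is controlled by $\|F\|_{LE^*+L^1_tL^2_x}\|u\|_{LE\cap E}$.

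Next I would treat the perturbation. Write $\Box_g = \Box_{g_0} + \delta P$, where $P$ is a second-order operator whose coefficients satisfy the dyadic decay $\sum_{l}2^{l|\al|}\|\pa^\al h_{jk}\|_{L^\infty(A_l)}\les 1$. Moving $\delta P u$ to the right-hand side and pairing with the multiplier, the key point is that every term arising from $Pu$ can be absorbed: the worst terms are of the form $\delta h\,\pa^2 u$ paired with $f\pa u$, and after an integration by parts the derivative falls either on $h$ (gaining the decay, which matches the $2^{-j}$ weight in $f$, hence is bounded by $\delta\|u\|_{LE}^2$) or on $f$ (which is slowly varying, $|f'|\les f/r$ where needed, so again controlled by $\delta\|u\|_{LE\cap E}^2$). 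Choosing $\delta_0$ small enough, all these perturbative terms are absorbed into the positive left-hand side, yielding \eqref{eq-LE-g1-620} on $\R^n$.

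For the exterior domain with star-shaped $\mathcal{K}$, the Dirichlet boundary $\pa M$ contributes an extra boundary integral when integrating by parts in $x$. Because $u$ vanishes on $\pa M$, only $\pa_r u$ (the normal-type derivative) survives, and the boundary term is $-\tfrac12\int_{\R_+\times\pa M} f(r)\,(\nu\cdot\hat r)\,|\pa_\nu u|^2\,d\sigma\,dt$ up to lower-order pieces; the star-shapedness of $\mathcal{K}$ (equivalently $x\cdot\nu\ge 0$ on $\pa\mathcal{K}$, and $f$ positive) makes this term have a favorable sign, so it can be dropped from the left-hand side. Here one takes $R$ large enough that $\mathcal{K}\subset B_1$ and uses that $g=g_0$ near the obstacle (or absorbs the small perturbation near $\pa M$ using smallness), so the boundary computation is the flat one; the rest of the argument is identical to the $\R^n$ case. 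The time-boundary terms again give $\|\pa u(0)\|_{L^2}$, and one passes to data in $\dot H^1_D$ by density.

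The main obstacle is the perturbative absorption step: one must verify that the multiplier $f$ (and its companion $g$) can be chosen simultaneously so that (i) the unperturbed quadratic form is coercive with the sharp $l^{-1/2}_\infty$ weights, and (ii) $f$ is slowly varying enough that all commutator terms $[\delta P, X]u$ are genuinely lower-order relative to the $2^{-l}$-type gains from the coefficient hypothesis, so that a single small $\delta_0$ works uniformly. This is the technical heart of \cite{MeSo06, MT09}; I would invoke their construction of $f$ rather than redo it, and only indicate why the hypothesis on $h$ in the statement is exactly what the absorption needs. A secondary, milder point is checking that the low-frequency / large-$r$ part of the estimate (where the standard Morawetz multiplier degenerates) is handled by the additional $g(r)u$ term together with Hardy's inequality, which is standard.
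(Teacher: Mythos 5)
Your approach matches the paper's: the paper simply rewrites $\Box_g = \Box - r_0^{ij}(x)\pa_i\pa_j + r_1^j(x)\pa_j$ with $\|\pa^\al r_0\|_{l^{|\al|}_1 L^\infty_x}\les\de$ and $\|\pa^\al r_1\|_{l^{|\al|+1}_1 L^\infty_x}\les\de$, then cites Metcalfe--Tataru for the $\R^n$ case and observes that the boundary term has a favorable sign for star-shaped $\mathcal{K}$ (citing \cite{MeSo06,MT09}), which is precisely the Morawetz-multiplier-plus-absorption argument you sketch. The only difference is one of exposition: you unpack the multiplier construction while the paper delegates it entirely to the references.
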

 Notice that by the assumption, we have $$\Box_g=\Box-r_0^{ij}(x)\pa_i\pa_j+r_1^j(x) \pa_j\ ,$$ where
$\|\pa^\al r_0(x)\|_{l^{|\al|}_1 L^\infty_x}\les \de$, $\|\pa^\al r_1\|_{l^{|\al|+1}_1 L^\infty_x}\les \de$, for all $\al$.
With this observation, the case $M=\R^n$ follows from \cite{MeTa07}. In the case of star-shaped obstacle, we need only to observe further that the boundary term will be of favorable sign and can be disregarded, see \cite{MeSo06, MT09}. We omit the details here.

\subsection{Local energy estimates in exterior domain}

With Lemma \ref{thm-LE-g1-620} at hand, we could give the proof of Theorem \ref{thm-LE}. 
First of all, by Duhamel's principle, it suffices to prove \beeq\label{eq-LE1*}\|u\|_{LE\cap E}\les  \|\phi\|_{\dot H^1_D}+\|\psi\|_{L^2_x}+\|F\|_{LE^*}\ \eneq for solutions to \eqref{wave}. We divide the proof into three steps: 
controlling the local part, the local energy, and the energy.

\subsubsection{Controlling the local part}
At first, we notice that it is possible to choose $R_0\ge 4$ large enough such that, 
$\tilde g$, as defined in \eqref{eq-gg}, satisfies the condition in Lemma \ref{thm-LE-g1-620} for any $R\ge R_0$. As a consequence, we have
\beeq\label{eq-LE-small}\|u\|_{LE\cap E}\les \|\pa u(0)\|_{L^2_x}+\|\Box_{\tilde g} u\|_{LE^*+L^1_t L^2_x}\ .\eneq

 Now, we define $u_1$ as the solution of the Dirichlet-wave equation with data $(\be_1 \phi, \be_1 \psi)$ and forcing term $\be_1 F$, and $u_2=u-u_1$.

For $u_1$, we have trivially 
\beeq\label{eq-LE*1}\|(\pa u_1, u_1)\|_{ L^2_t L^2_x (B_R)}\les \|\be_1\phi\|_{H^1}+\|\be_1\psi\|_{L^2}+\|\be_1F\|_{L^2_t L^2_x}
\les  \|\phi\|_{\dot H^1_D}+\|\psi\|_{L^2_x}+\|F\|_{LE^*}
\ ,
\eneq
by \eqref{LE} and the Hardy inequality \eqref{eq-Hardy}.

To estimate $u_2$, 
we introduce $u_0$ as the solution of the Cauchy problem in $\R^n$
$$\Box_{\tilde g} u_0=(1-\be_1)F, 
u_0(0,x)=(1-\be_1)\phi,
\pt u_0(0,x)=(1-\be_1)\psi.
$$
For $u_0$, we know from \eqref{eq-LE-small} that,
\beeq\label{eq-LE*2}\|u_0\|_{LE}\les \|\phi\|_{\dot H^1_D}+\|\psi\|_{L^2_x}+\|F\|_{LE^*}\ .\eneq
 Now, similar to \cite{SmSo}, let $w=u_2-(1-\be) u_0$, noticing that
 $$
 \Box_g [(1-\be)u_0]= \Box_{\tilde g} [(1-\be)u_0]=(1-\be)\Box_{\tilde g} u_0+[\Delta_{\tilde g}, \be]u_0 
=  (1-\be_1)F+[\Delta_{\tilde g}, \be]u_0 \ ,
 $$ it is easy to see that 
$$\Box_g w =[\beta,\Delta_{\tilde g}] u_0, w|_{\pa M}=0, w(0,x)=0, \pt w(0,x)=0$$
due to the support properties of $\mathcal{K}$, $\be$. Noticing that $[\beta,\Delta_{\tilde g}] u_0$ is supported in $|x|\le 2R$, we could apply \eqref{LE} to obtain
\beeq\label{eq-LE*3}\|(\pa w, w)\|_{ L^2_t L^2_x (B_R)}\les \|
[\beta,\Delta_{\tilde g}] u_0
\|_{L^2_t L^2_x}\les \|u_0\|_{LE} \ .\eneq
Recalling $u=u_1+u_2=u_1+w+(1-\be)u_0$, \eqref{eq-LE*1}-\eqref{eq-LE*3}, we arrived at
\beeq\label{eq-LE-local}\|(\pa u, u)\|_{ L^2_t L^2_x (B_R)}
\les \|\phi\|_{\dot H^1_D}+\|\psi\|_{L^2_x}+\|F\|_{LE^*}\ .\eneq

\subsubsection{Controlling the local energy}
Turning to the full local energy estimates, we divide $u$ into $\be_2 u+(1-\be_2)u$.
For $(1-\be_2)u$, due to the support property, and $\tilde g$ agrees with $g$ for $|x|\ge R/2$, we observe that
$$\Box_{\tilde g} (1-\be_2)u=\Box_g (1-\be_2)u=(1-\be_2)F+[\Delta_g, \be_2] u\ .$$
Viewing $(1-\be_2)u$ as a solution of the Cauchy problem, we 
get from \eqref{eq-LE-small} that
$$\|u\|_{LE}\les \|\be_2 u\|_{LE}+\|(1-\be_2)u\|_{LE}\les \|\pa (1-\be_2) u(0)\|_{L^2_x}+\|(1-\be_2) F\|_{LE^*}
+\|(\pa u, u)\|_{ L^2_t L^2_x (B_R)}\ .
$$
There, applying \eqref{eq-LE-local}, we get
\beeq\label{eq-LE1**}
\|u\|_{LE}\les  \|\phi\|_{\dot H^1_D}+\|\psi\|_{L^2_x}+\|F\|_{LE^*}\ ,\eneq
which is the local energy part of \eqref{eq-LE1*}.

\subsubsection{Controlling the energy}
It remains to control the energy norm in \eqref{eq-LE1*}. For this, we introduce a modified energy norm
$$A(t)=\left(\int_M \frac{u_t^2(t,x)+g^{ij}(x)\pa_i u(t,x) \pa_j u(t,x)}{2} \sqrt{|g|} dx\right)^{1/2}\ ,$$
where $|g|$, $(g^{ij})$ are the determinant and inverse matrix to the matrix $(g_{ij})$. From geometrical point of view, it is a natural definition of the energy. By the uniform elliptic assumption, it is equivalent to the classical energy norm $E$. For $A(t)$, we know from the definition, after integration by parts and noticing that $\pt u|_{\pa M}=0$, that
\beeq\label{eq-E2}\frac{d A(t)^2}{dt}=\int_M u_t F \sqrt{|g|} dx\ .\eneq
After integration in time, we get for any $T$,
$$A^2(T)\le A^2(0)+\int_0^T\int_M |u_t F| \sqrt{|g|} dxdt\les 
 \|\phi\|_{\dot H^1_D}^2+\|\psi\|_{L^2_x}^2
+\|u\|_{LE}\|F\|_{LE^*}$$
Applying \eqref{eq-LE1**}, we know that
$$\|\pa u(T)\|_{L^2_x}^2\les A^2(T)\les  \|\phi\|_{\dot H^1_D}^2+\|\psi\|_{L^2_x}^2+\|F\|_{LE^*}^2$$
and so
$$\|u\|_{LE\cap E}
\les  \|\phi\|_{\dot H^1_D}+\|\psi\|_{L^2_x}+\|F\|_{LE^*}\ ,$$
which is \eqref{eq-LE1*}.
This completes the proof of Theorem \ref{thm-LE}.

\subsection{Higher order estimates}
In this subsection, we give the proof of the higher order local energy estimates, Proposition \ref{thm-LE-h}, based on  Theorem \ref{thm-LE} and Lemma \ref{thm-LE-g1-620}.

As usual, part of the difficulty comes from the fact that the vector fields do not preserve the boundary condition $u|_{\pa M}=0$ in general. Despite of the difficulty, we know that $\pt$ preserves the boundary condition and commutates with the equation. As a consequence, provided the solution to \eqref{wave} satisfies the compatibility condition of order $k+1$, by Theorem \ref{thm-LE}, we have
\begin{eqnarray}
\sum_{0\le j\le k}\|\pt^j u\|_{LE\cap E}&\les&  \sum_{|\al|\le k}\|\nabla^\al (\nabla \phi, \psi)\|_{L^2_x}+\sum_{|\ga|\le k-1}\|\pa^\ga F(0,x)\|_{L^2_x}\label{eq-LE12h1}
\\
&&+\sum_{0\le j\le k} \|\pt^j F\|_{LE^*+L^1_t L^2_x}
\nonumber
\ .\end{eqnarray}
Here,  we have expressed the initial data of $\pt^j u$, through the equation \eqref{wave}, by the combination of $\nabla^\al \phi$, $\nabla^\al \psi$ and $\pa^\al F(0,x)$.

To extend the vector field from $\pt$ to  $Z$, we observe first
$$\|Z^\al u\|_{LE\cap E}\les
\|Z^\al \be_2 u\|_{LE\cap E}+\|Z^\al (1-\be_2 ) u\|_{LE\cap E}\ .
$$
For the second term, $\|Z^\al (1-\be_2 ) u\|_{LE\cap E}$, notice that
$$\Box_{\tilde g} Z^\al (1-\be_2 ) u=
[\Box_{\tilde g}, Z^\al] (1-\be_2 ) u-Z^\al [\Box_{\tilde  g}, \be_2] u+Z^\al (1-\be_2 ) F\ .$$
For
$[\Box_{\tilde g}, Z^\al]$, by \eqref{H1.1}, we know that, for any given $\delta>0$, there exists $R_1\ge R_0$, such that for any $R \ge R_1$, there exists $c_i(x)$ such that
$$|[\Box_{\tilde g}, Z^\al]v|\le c_1(x) \sum_{|\ga|\le |\al|} |Z^\ga \pa v| +c_2 (x)
\sum_{|\ga|\le |\al|} |Z^\ga v|
$$ with $\|c_i(x)\|_{l^{i}_1 L^\infty_x}\le \de$. Here, we used the fact that the first perturbation is radial, which commutates with the rotational vector fields $\Omega$.

Applying  Lemma \ref{thm-LE-g1-620}, together with these information,
\begin{eqnarray*}
\|Z^\al (1-\be_2 ) u\|_{LE\cap E} & \les & 
\sum_{|\ga|\le |\al|}\|(\nabla,\Omega)^\ga (\nabla \phi, \psi)\|_{L^2_x}
+\sum_{|\ga|\le |\al|-1}\|Z^\ga F(0,x)\|_{L^2_x}
 \\
 &  & +
\| [\Box_{\tilde g}, Z^\al] (1-\be_2 ) u\|_{LE^*}+\sum_{|\ga|\le |\al|+1}\|\pa^\ga  u\|_{L^2_t L^2_x (B_R)}
 \\
&&
 +\|Z^\al (1-\be_2 ) F\|_{LE^*+L^1_t L^2_x} \\
 & \les & 
\sum_{|\ga|\le |\al|}\|(\nabla,\Omega)^\ga (\nabla \phi, \psi)\|_{L^2_x}
 +
\sum_{|\ga|\le |\al|-1}\|Z^\ga F(0,x)\|_{L^2_x}
 \\
 &  & 
 + \de \sum_{|\ga|\le |\al|}\| Z^\ga (1-\be_2 ) u\|_{LE}+
\sum_{|\ga|\le |\al|+1}\|\pa^\ga  u\|_{L^2_t L^2_x (B_R)}
\\ 
&&
 +\sum_{|\ga|\le |\al|}\|Z^\ga  F\|_{LE^*+L^1_t L^2_x} \ .
\end{eqnarray*}
Summing over $|\al|\le k$ and setting $\de$ small enough to be absorbed by the left, we conclude that
\begin{eqnarray}
\|u\|_{LE_k\cap E_k} & \le & \|\be_2 u\|_{LE_k\cap E_k}+\| (1-\be_2 ) u\|_{LE_k\cap E_k}\nonumber\\
 & \les & \sum_{|\ga|\le k}\|(\nabla,\Omega)^\ga (\nabla \phi, \psi)\|_{L^2_x}
 +\|Z^\ga  F\|_{LE^*+L^1_t L^2_x} \nonumber
 \\
 &  & +\sum_{|\ga|\le k-1}\|Z^\ga F(0,x)\|_{L^2_x}
+
\sum_{|\ga|\le k}\|\pa^\ga  u\|_{LE\cap E (B_R)}\ .\label{eq-LE-h1}
\end{eqnarray}

To complete the proof of Proposition \ref{thm-LE-h}, it suffices to give the control of the last term in \eqref{eq-LE-h1}.

\subsubsection{Controlling the local part}
Let us prove Proposition \ref{thm-LE-h}, by \eqref{eq-LE12h1}, \eqref{eq-LE-h1},  and induction.

The case $k=0$ follows from Theorem \ref{thm-LE}. Assume it is true for some $k=j\ge 0$, then for $k=j+1$, 
since the problem satisfies the compatibility condition of order $j+2$, we have 
 the compatibility condition of order $j+1$ for $w=\pt u$, and
 $$\Box_g w=\pt F, w|_{\pa M}=0, w(0,x)=\psi, \pt w(0,x)=\Delta_g\phi+F(0,x)\ .$$

At first, we observe that
$$\|\pa^\ga  \pa^2 u\|_{(L^2_t\cap L^\infty_t) L^2_x (B_R)}\les \|\pa^\ga \pa w\|_{(L^2_t\cap L^\infty_t) L^2_x (B_R)}
+
\|\pa^\ga \nabla^2 u\|_{(L^2_t\cap L^\infty_t) L^2_x (B_R)}\ ,$$
with $|\ga|= j$.
For the second term, using elliptic estimate, we get
\begin{eqnarray*}
\|\pa^\ga \nabla^2 u\|_{L^2_x (B_R)} & \les & \|\Delta_g \pa^\ga u\|_{ L^2_x (B_{2R})}+\| \pa^\ga u\|_{ L^2_x (B_{2R})} \\
 & \les & \| \pa^\ga \Delta_g u\|_{ L^2_x (B_{2R})}+\sum_{|\al|\le j+1=k} \|  \pa^\al u\|_{ L^2_x (B_{2R})} \\
 & \les & \| \pa^\ga \pt^2 u\|_{ L^2_x (B_{2R})}+\| \pa^\ga F\|_{ L^2_x (B_{2R})}+
 \sum_{|\al|\le j+1} \|  \pa^\al u\|_{ L^2_x (B_{2R})}\ ,
\end{eqnarray*}
where in the last inequality, we used the equation \eqref{wave}.

In conclusion, we get
\begin{eqnarray}
\|\pa^\ga  \pa^2 u\|_{(L^2_t\cap L^\infty_t) L^2_x (B_R)} & \les & 
\|\pa^\ga \pa w\|_{(L^2_t\cap L^\infty_t) L^2_x (B_R)}
+
\|\pa^\ga \nabla^2 u\|_{(L^2_t\cap L^\infty_t) L^2_x (B_R)} \nonumber\\
 & \les & 
 \| \pa^\ga \pa w\|_{(L^2_t\cap L^\infty_t) L^2_x (B_{2R})}
+\| \pa^\ga F\|_{ (L^2_t\cap L^\infty_t) L^2_x (B_{2R})}\nonumber\\
&&+
 \sum_{|\al|\le j+1} \|  \pa^\al u\|_{(L^2_t\cap L^\infty_t) L^2_x (B_{2R})} \nonumber\\
 & \les & 
 \|w\|_{LE_j\cap E_j}+ \|u\|_{LE_j\cap E_j}
+\| \pa^\ga F\|_{ (L^2_t\cap L^\infty_t) L^2_x (B_{2R})} \label{eq-1214-1}\ ,
\end{eqnarray}
where, in the last inequality, we have used the Hardy inequality, Lemma \ref{thm-Hardy}.

By \eqref{eq-1214-1} and the induction assumption, we have
\begin{eqnarray*}
\sum_{|\ga|\le j+1}\|\pa^\ga  u\|_{LE\cap E (B_R)}
 &\les &
\|u\|_{LE_{j}\cap E_{j}}+ 
\sum_{|\ga|= j}\|\pa^\ga \pa^2  u\|_{(L^2_t\cap L^\infty_t) L^2_x (B_R)}\\
 &\les &
\|u\|_{LE_{j}\cap E_{j}}+\| w\|_{LE_j\cap E_j}+ \sum_{|\ga|= j}\|\pa^\ga F\|_{(L^\infty_t\cap L^2_t )L^2_x(B_{ 2R})}\\
&\les&
\sum_{|\ga|\le j+1}\|(\nabla,\Omega)^\ga (\nabla \phi, \psi)\|_{L^2_x}
 +\|Z^\ga  F\|_{LE^*+L^1_t L^2_x} 
 \\
 &  & +\sum_{|\ga|\le j}\|Z^\ga F(0,x)\|_{L^2_x}
+ \sum_{|\ga|\le j}\|\pa^\ga F\|_{(L^\infty_t\cap L^2_t )L^2_x(B_{ 2R})}\ .
\end{eqnarray*}
Then, by \eqref{eq-LE-h1} with $k=j+1$, $\|u\|_{LE_{j+1}\cap E_{j+1}}$ is controlled by 
\begin{eqnarray*}
 & &\sum_{|\ga|\le j+1}\|(\nabla,\Omega)^\ga (\nabla \phi, \psi)\|_{L^2_x}
 +\|Z^\ga  F\|_{LE^*+L^1_t L^2_x} +\sum_{|\ga|\le j}\|Z^\ga F(0,x)\|_{L^2_x}
 \\
 &  & 
+
\sum_{|\ga|\le j+1}\|\pa^\ga  u\|_{LE\cap E (B_R)}\\
 &\les &\sum_{|\ga|\le j+1}\|(\nabla,\Omega)^\ga (\nabla \phi, \psi)\|_{L^2_x}
 +\|Z^\ga  F\|_{LE^*+L^1_t L^2_x}+\sum_{|\ga|\le j}\|Z^\ga F(0,x)\|_{L^2_x}
 \\
 &  & 
+ \sum_{|\ga|\le j}\|\pa^\ga F\|_{(L^\infty_t\cap L^2_t )L^2_x(B_{ 2R})}\ .
\end{eqnarray*}
This completes the proof of Proposition \ref{thm-LE-h}.

\subsection{A relation between KSS type norm and local energy norm}
In this subsection, for reader's convenience, we give a proof of \eqref{eq-KSS'} and \eqref{eq-KSS2'} in Lemma \ref{KSS-dual}.

As usual, we use a cutoff argument (\cite{KSS02}). Let $F_1=F \chi_{|x|\le T}$ and $F_2=F-F_1$.
\begin{eqnarray*}
\|F\|_{LE^*+L^1_T L^2_x }& \les & 
\|F_1\|_{LE^*}+\|F_2\|_{L^1_T L^2_x}
 \\
 & \les &
\|2^{j/2} F_1(t,x) \Phi_j(x)\|_{l^1_j L^2_T L^2_x}+T^{-1/2}\||x|^{1/2}F_2\|_{L^1_T L^2_x}
 \\
 & \les &
\|2^{j/2} F_1(t,x) \Phi_j(x)\|_{l^2_j L^2_T L^2_x} \|1\|_{l^2_{1\le j\le \ln T}}+\||x|^{1/2} F_2\|_{L^2_T L^2_x}
 \\
 & \les &
(\ln T)^{1/2}\|F\|_{l^{1/2}_2(L^2_T L^2_x)}\ .
\end{eqnarray*}
Similarly,
\begin{eqnarray*}
\|F\|_{LE^*+L^1_T L^2_x }& \les & 
\|2^{j/2} F_1(t,x) \Phi_j(x)\|_{l^1_j L^2_T L^2_x}+T^{-\mu}\||x|^{\mu}F_2\|_{L^1_T L^2_x}
 \\
 & \les &
\|2^{j\mu} F_1(t,x) \Phi_j(x)\|_{l^2_j L^2_T L^2_x} \|2^{j(1/2-\mu)}\|_{l^2_{1\le j\le \ln T}}+T^{1/2-\mu}\||x|^{\mu}F_2\|_{L^2_T L^2_x}
 \\
 & \les &
T^{1/2-\mu}\|F\|_{l^{\mu}_2(L^2_T L^2_x)}\ .
\end{eqnarray*}
This completes the proof.

\section{Glassey conjecture with dimension $3$}\label{sec-Gla-3}
In this section, we will prove Theorem \ref{thm-3}, mainly based on
Lemma \ref{thm-Sobo} and Proposition \ref{thm-LE-h}.

As usual, we shall use iteration to give the proof. We set $u_0\equiv 0$ and recursively define $u_{k+1}$ ($k\ge 0$) be the solution to the linear equation
$$\Box_g u_{k+1}=F_p(u_k, \pt u_k), u_{k+1}(t,x)|_{\pa M}=0, u_{k+1}(0,x)=\phi(x), \pt u_{k+1}(0,x)=\psi(x).
$$ Note that the compatibility condition \eqref{comp1} ensures that, we still have the compatibility condition of order $3$ for $u_{k+1}$, and we can apply Proposition \ref{thm-LE-h}.

By the smallness condition \eqref{a-smallness} on the data, there is a constant $C_1$ so that $\|u_1\|_{LE_2\cap E_2} \le C_1\ep$, and
\begin{eqnarray*}
\|u\|_{LE_2\cap E_2} & \le &  C_1 \|Y^{\le 2} (\nabla \phi,\psi)\|_{L^2_x}+C_1\|Z^{\le 2} \Box_g u\|_{L^1_t L^2_x} \\
 & & +C_1\|Z^{\le 1}\Box_g u(0,x)\|_{L^2_x}+\|\pa^{\le 1} \Box_g u\|_{(L^\infty_t\cap L^2_t )L^2_x(B_{2R})}
\end{eqnarray*}

We shall argue inductively to prove that there exists $\ep_0>0$ such that for any $\ep\le\ep_0$, we have
\beeq\label{eq-unif2}\|u_{k}\|_{LE_2\cap E_2} \le 3 C_1 \varepsilon, \eneq
for all $k\ge 1$. It has been true for $k=1$. 
For $k\ge 1$, assume we have \eqref{eq-unif2} for any $u_j$ with $j\le k$.

At first, by Proposition \ref{thm-1212}, we know that
\beeq\label{eq-bd-21}
\|Z^{\le 1} u\|_{L^\infty_t L^\infty_x}\les \|u\|_{E_2}\ ,\eneq
which particularly gives us $|Z^{\le 1} u_{j}(t,x)|\les \ep, j\le k$ by the induction assumption. 

It follows from the definition of $F_p$ that,
$$|Z^{\le 1} F_p(u_k, \pt u_k)(t,x)|\le C(\|u_k(t,\cdot)\|_{L^\infty_{x}})|\pa u_k|^{p-1}(|Z^{\le 1} u_k \pa u_k|+|Y^{\le 1}\pa u_k|+|\pt^2 u_{k}|)\ ,$$
where $C(t)$ is a continuous increasing function. Thus, if $\ep\le 1$, we have
$$\sum_{|\al|\le 1}\|Z^\al F_p(u_k, \pt u_k)(0,\cdot )\|_{L^2_x}\les
\ep^{p-1}(\ep+\|\pt^2 u_{k}(0,\cdot)\|_{L^2_x})\ .
$$
For $\|\pt^2 u_{k}(0,\cdot)\|_{L^2_x}$, using the definition of $u_k$, we get
$$\|\pt^2 u_{k}(0,\cdot)\|_{L^2_x}\les
\|\Delta_g u_{k}(0,\cdot)\|_{L^2_x}+
\|F(u_{k-1}, \pt u_{k-1})(0,\cdot)\|_{L^2_x}
\les \ep+\ep^p\les \ep\ ,
$$ and so
\beeq\label{0203-1}\sum_{|\al|\le 1}\|Z^\al F_p(u_k, \pt u_k)(0,\cdot )\|_{L^2_x}\les \ep^p\ .\eneq

Similarly, 
$|\pt^2 u_k|\les |\Delta_g u_k|+|\pa u_{k-1}|^p
\les |\nabla\pa u_k|+|\pa u_{k-1}|
$,
and so
$$|\pa^{\le 1} F_p(u_k, \pt u_k)(t,x)|\le \tilde C(\|u_k\|_{L^\infty_{t,x}})|\pa u_k|^{p-1}(|\nabla^{\le 1}\pa u_k|+|\pa u_{k-1}|)\ ,$$
for some continuous increasing function $\tilde C$.
Then $$\|\pa^{\le 1} F_p(u_k, \pt u_k)\|_{(L^2_t\cap L^\infty_t )L^2_x(B_{2R})}\les (\|u_k\|_{E_2}+\|u_{k-1}\|_{E_2})^{p-1}
\|u_k\|_{LE_2\cap E_2}\les \ep^p\ .
$$

Summarizing the above estimates, there exists $C_2$ such that
\beeq\label{eq-620-15}
\|u_{k+1}\|_{LE_2} \le C_1\varepsilon +C_2 \ep^p+ C_1  \|Z^{\le 2} F_p(u_k, \pt u_k)\|_{L^1_tL^2_x}.\eneq

By \eqref{eq-620-15}, to complete the proof \eqref{eq-unif2}, it suffices to show
\beeq\label{eq-unif}
\sum_{|\al|\le 2}\|Z^\al F_p(u, \pt u)\|_{L^1_tL^2_x}
\les \|u\|_{LE_2\cap E_2}^p\ .\eneq

Notice that there exist smooth functions $b_i$, $1\le i\le 5$, such that
\begin{eqnarray*}
& | Z^{\le 2} F_p(u, \pt u)| \le &|b_1(u)| |\pa u|^{p-1} |Z^{\le 2} \pa u|+|b_2(u)| |\pa u|^{p-2} |Z^{\le 1} \pa u|^2\\
&&+|b_3(u)| |\pa u|^{p-1} |Z u| |Z^{\le 1} \pa u|+|b_4 (u)| |\pa u|^p |Z u|^2+|b_5(u)| |\pa u|^p |Z^2 u|\ .
  \end{eqnarray*}
By Lemma \ref{thm-Sobo}, we have
\beeq\label{eq-bd} |\pa u|\les \frac{\|u\|_{E_{2}}}{\<r\>} ,\ |Z u|\les \|u\|_{E_{2}}\ .\eneq
Using \eqref{eq-bd-21}, smoothness of $b_i$ and \eqref{eq-bd}, we see that $u$ is bounded and
$$
|  Z^{\le 2} F_p(u, \pt u) |\les |\pa u|^{p-1} (|Z^{\le 2} \pa u|+|Z^{\le 2} u|/\<r\>)+
|\pa u|^{p-2} |Z^{\le 1} \pa u|^2\ .$$

The first term can be dealt with as follows, by \eqref{eq-bd}, Lemma \ref{thm-Sobo}, and the fact that $p>2$,
\begin{eqnarray*}
  &&\||\pa u|^{p-1} (|Z^{\le 2} \pa u|+|Z^{\le 2} u|/\<r\>)\|_{L^1_t L^2_x} \\
  &\les& \|\<r\>\pa u\|_{L_t^\infty L^\infty_x}^{p-2}\|\<r\>^{(3-p)/2}\pa u\|_{L^2_t L^\infty_x}\|  \<r\>^{-(p-1)/2}\left(|Z^{\le 2} \pa u|+\frac{|Z^{\le 2} u|}{\<r\>}\right)\|_{L^2_t L^2_x}\\
  &\les& \|u\|_{E_2}^{p-2}\|  \<r\>^{-1/2-(p-2)/2}(|Z^{\le 2} \pa u|+|Z^{\le 2} u|/\<r\>)\|_{L^2_t L^2_x}^2\\
  &\les & \|u\|_{LE_{2}}^{2}\|u\|_{E_2}^{p-2} \ .
\end{eqnarray*}  Similarly, for the second term, we get
\begin{eqnarray*}
\||\pa u|^{p-2} |Z^{\le 1} \pa u|^2\|_{L^1_t L^2_x}
  &\les&\|\<r\>\pa u\|_{L_t^\infty L^\infty_x}^{p-2} \|  \<r\>^{-(p-2)/2} Z^{\le 1} \pa u \|_{L^2_t L^4_x}^2\\
    &\les& \|u\|_{E_{2}}^{p-2}\|  \<r\>^{-(p-2)/2-1/2}Z^{\le 2} \pa u\|_{L^2_t L^2_{x}}^2\\
        &\les& \| u\|_{LE_{2}}^2\|u\|_{E_2}^{p-2}\ .
\end{eqnarray*}
This finishes the proof of  \eqref{eq-unif} and so is the uniform boundedness \eqref{eq-unif2}.

Similar proof will give us the convergence of the sequence $\{u_k\}$
$$\|u_{k+1}-u_k\|_{LE\cap E}\le C\|F_p(u_k)-F_p(u_{k-1})\|_{L^1_tL^2_x}\le
\frac{1}2\|u_k-u_{k-1}\|_{LE\cap E}$$
provided that $\ep_0$ is small enough.

Together with the uniform boundedness \eqref{eq-unif2}, we find an unique global solution $u\in L^\infty_t H^3\cap Lip_t H^2$ with $\|u\|_{LE_2\cap E_2}\le 3 C_1 \ep$.
Strictly speaking,
to complete the proof, we need also to prove the regularity of the solution $u\in C_t H^3\cap C^1_t H^2$. As it is standard, we omit details here, and refer the reader to the end of Section 4 in \cite{Wa13} or \cite{HWY11} P533.

For the remaining case,
$p=2$, 
we need only to notice that by Proposition \ref{thm-LE-h},  we have  for any $T\ge 2$
\begin{eqnarray*}
\|u\|_{LE_2\cap E_2}
&\les& \sum_{|\al|\le 2}\|(\nabla,\Omega)^\al (\nabla \phi,\psi)\|_{L^2_x}+\|Z^\al F
\|_{L^1_T L^2_x}
\\
&&
+ \sum_{|\ga|\le 1}\|Z^\ga F(0,x)\|_{L^2_x}+\| \pa^\ga F\|_{(L^\infty_T\cap L^2_T )L^2_x(B_{2R})}\nonumber
\end{eqnarray*}
for solutions to \eqref{wave} in $[0,T]\times M$. 
Previous proofs, together with Lemma \ref{KSS-dual} (1.9), give us
$$\|u\|_{LE_2\cap E_2}\les \ep
+\tilde C(\|u\|_{E_2})\|u\|_{LE_2\cap E_2}^2
+\|Z^{\le 2} \pa u\|_{l^{-1/2}_2(L^2_T L^2_x)}^2
\les \ep
+(\tilde C(\|u\|_{E_2})+\ln T)\|u\|_{LE_2\cap E_2}^2
$$
which essentially give the almost global existence, as long as $\ep^2 \ln T\ll \ep$, i.e., $T\le \exp (c/\ep)$ with small enough $c>0$.

\section{Radial Glassey conjecture} \label{sec-rad}
In this section, we give the proof for Theorem \ref{thm-sub}, based on
Lemma \ref{thm-trace}, Proposition \ref{thm-LE-h} and Lemma \ref{KSS-dual}.

We set $u_0\equiv 0$ and recursively define $u_{k+1}$
to be the solution to the linear equation
\beeq\label{3iterate}
  \Box_g u_{k+1} = G_p(u_{k}, \pt u_k), u_{k+1}|_{x\in \pa B_1}=0,    u_{k+1}(0,x)=\phi, 
  \pt u_{k+1}(0,x) = \psi.
\eneq
By assumption, $u_k$ are radial functions.

\subsection{Global existence}
Recall Lemma \ref{thm-trace}, $M=\{|x|>1\}$, where $r\sim \<r\>$ and the fact that $u$ is radial,  we have
\beeq\label{eq-trace-rad2}
\|\<r\>^{(n-1)/2} \pa u\|_{L^\infty_x(M)}\les \|u\|_{E_1}\ .
\eneq

By the smallness condition \eqref{b-smallness} on the data and the equation,  we know from the definition of $G_p$ that, 
 for $\ep$ small enough, we have
$$\|G_p(u_k, \pt u_k)(0,\cdot )\|_{L^2_x}\les
\ep^{p}\les \ep
$$
$$\|G_p(u_k, \pt u_k)\|_{(L^2_t\cap L^\infty_t )L^2_x(B_{2R})}\les
\|u_k\|_{E_1}^{p-1}\|u_k\|_{LE\cap E}\ .
$$

With the above estimates, it follows from Proposition \ref{thm-LE-h} that there is a constant $C_3$ so that $\|u_1\|_{LE_1} \le C_3\ep$, and
\beeq\label{eq-620-15-2}
\|u_{k+1}\|_{LE_1} \le C_3\varepsilon + C_3 \|\pa^{\le 1} G_p(u_k, \pt u_k)\|_{LE^*+L^1_t L^2_x}+C_3
\|u_k\|_{E_1}^{p-1}\|u_k\|_{LE\cap E}
.\eneq

As in Section \ref{sec-Gla-3}, for global existence, we need to 
prove the uniform boundedness and convergence of the iteration series $u_k$. Here, we only give the proof of the uniform boundedness, which could be reduced to the proof of
\beeq\label{eq-radG-1}\|G_p(u, \pt u)\|_{{LE}_1^*}\les \|u\|_{{LE}_1\cap E_1}^p\ ,
\eneq for any $p>p_c$ and radial $u\in LE_1\cap E_1$.
In fact, by Lemma \ref{thm-trace}, we have
\begin{eqnarray*}
\|G_p(u, \pt u)\|_{{LE}_1^*}&=&
\| \pa^{\le 1} G_p(u, \pt u)\|_{l^{1/2}_1 L^2 L^2}
\\
&\les&\|  |\pa u|^{p-1} \pa^{\le 1} \pa u\|_{l^{1/2}_1 L^2 L^2}
\\
&\les&
\|\<r\>^{(n-1)/2}\pa u\|_{L^\infty_{t,x}}^{p-1}
\| \<r\>^{-(n-1)(p-1)/2}  \pa^{\le 1} \pa u\|_{l^{1/2}_1L^2 L^2}\\
&\les&\|u\|_{E_1}^{p-1}
\| \<r\>^{-(n-1)(p-1)/2}  \pa^{\le 1} \pa u\|_{l^{1/2}_1L^2 L^2}
\\
&\les&
\|u\|_{E_1}^{p-1}\|\pa^{\le 1} \pa u\|_{l^{-1/2}_\infty L^2 L^2}
\les
\|u\|_{LE_1\cap E_1}^{p}
\end{eqnarray*}
provided that
$(n-1)(p-1)/2>1$, that is $p>p_c$.

\subsection{The critical case}\label{sec-rad2}
For the critical case $p= p_c$, by \eqref{eq-620-15-2} and Lemma \ref{KSS-dual} (1.11), we have,  for any $T\ge 2$,
$$\|u_{k+1}\|_{LE_1\cap E_1}
\le C_3\ep+C (\ln T)^{1/2} \|\pa^{\le 1} G_p(u_k,\pt u_k)\|_{l^{1/2}_2 L^2_T L^2_x }
+C_3\|u_k\|_{E_1}^{p-1}\|u_k\|_{LE\cap E}\ .
$$

Since $p=p_c$, i.e., $(n-1)(p-1)/2=1$, we have
\begin{eqnarray*}
\|\pa^{\le 1}G_p(u, \pt u)\|_{l^{1/2}_2 L^2_T L^2_x }
&\les&
\|  |\pa u|^{p-1} \pa^{\le 1} \pa u\|_{l^{1/2}_2 L^2 L^2}
\\
&\les&
\|\<r\>^{(n-1)/2} \pa u\|_{L^\infty_{t,x}}^{p-1}
\| \<r\>^{-(n-1)(p-1)/2}  \pa^{\le 1} \pa u\|_{l^{1/2}_2 L^2 L^2}\\
&\les&\|u\|_{E_1}^{p-1}
\|  \pa^{\le 1} \pa u\|_{l^{-1/2}_2L^2 L^2}
\\
&\les&
 (\ln T)^{1/2} \|u\|_{LE_1\cap E_1}^{p}\ ,
\end{eqnarray*}
where we have used Lemma \ref{KSS-dual} (1.9) in the last step.
In conclusion, we have obtained
$$\|u_{k+1}\|_{LE_1\cap E_1}
\le C_3\ep+C   \|u_k\|_{LE_1\cap E_1}^p \ln T\ ,
$$
which essentially give rise to the almost global existence, by choosing
$T$ such that $\ep^p \ln T\ll \ep$, that is,
$T=\exp(c \ep^{1-p})$ with certain small enough $c$.

\subsection{The case $p<p_c$}\label{sec-rad3}
Similarly, for $1<p<p_c$, we have $\mu=(n-1)(p-1)/4\in (0,1/2)$.
By \eqref{eq-620-15-2} and Lemma \ref{KSS-dual} (1.12), we get for any $T\ge 2$,
$$\|u_{k+1}\|_{LE_1\cap E_1}
\le C_3\ep+C T^{1/2-\mu} \|\pa^{\le 1} G_p(u_k,\pt u_k)\|_{l^{\mu}_2 L^2_T L^2_x}
+C_3\|u_k\|_{E_1}^{p-1}\|u_k\|_{LE\cap E}\ .
$$
As before, by Lemma \ref{KSS-dual} (1.10),
\begin{eqnarray*}
\|\pa^{\le 1} G_p(u, \pt u)\|_{l^{\mu}_2(L^2_T L^2_x)}&\les&
\|  |\pa u|^{p-1} \pa^{\le 1} \pa u\|_{l^{\mu}_2 L^2 L^2}
\\
&\les&
\|\<r\>^{(n-1)/2} \pa u\|_{L^\infty_x}^{p-1}
\| \<r\>^{-(n-1)(p-1)/2}  \pa^{\le 1} \pa u\|_{l^{\mu}_2 L^2 L^2}\\
&\les&\|u\|_{E_1}^{p-1}
\|  \pa^{\le 1} \pa u\|_{l^{-\mu}_2L^2 L^2}
\\
&\les&
T^{1/2-\mu} \|u\|_{LE_1\cap E_1}^p\ .
\end{eqnarray*}
With the above two estimates,
we get
$$\|u_{k+1}\|_{LE_1\cap E_1}
\le C_3\ep+C T^{1-2\mu} 
\|u_k\|_{LE_1\cap E_1}^p\ ,
$$
and then the long time existence in the interval $[0,T]$ could essentially be proved, by setting
$T$ such that
$\ep^p T^{1-2\mu}\ll \ep$, i.e.,
$$T=c \ep^{\frac{2(p-1)}{(n-1)(p-1)-2}}$$ with small enough $c$.

\medskip 
{\bf Acknowledgements.}
The author would like to thank the anonymous referee for valuable comments, which helped improve the manuscript.


\end{document}